\documentclass[11pt]{amsart}

\usepackage{epigamath}


\usepackage[english]{babel}


\numberwithin{equation}{section}


\usepackage[shortlabels]{enumitem}

\usepackage{subfiles}

\usepackage{graphicx}			
\usepackage{amssymb,amsfonts}
\usepackage{mathrsfs}
\usepackage{amsthm}
\usepackage{amsmath}
\usepackage{tikz}
\usepackage{tikz-cd}
\usepackage{accents}
\usepackage{upgreek}
\usepackage{mathtools}
\usepackage{caption}
\usepackage{url}
\usepackage{float}
\usepackage{todonotes, derivative}
\usepackage{bbm} 

\usepackage{xparse}
\usepackage{xfrac}

\tikzset{
  commutative diagrams/.cd, 
  arrow style=tikz, 
  diagrams={>=stealth}
}

\usetikzlibrary{calc}
\usetikzlibrary{fadings}
\usetikzlibrary{decorations.pathmorphing}
\usetikzlibrary{decorations.pathreplacing}


\theoremstyle{plain}
\newtheorem{theorem}{Theorem}[section]
\newtheorem{corollary}[theorem]{Corollary}
\newtheorem{lemma}[theorem]{Lemma}
\newtheorem{proposition}[theorem]{Proposition}

\newtheorem{maintheorem}{Theorem}
\newtheorem{maincorollary}[maintheorem]{Corollary}

\theoremstyle{remark}
\newtheorem{remark}[theorem]{Remark}

\newtheorem{setting}[theorem]{Setting}

\theoremstyle{definition}
\newtheorem{definition}[theorem]{Definition}


\DeclareMathOperator{\ev}{\mathsf{ev}}
\DeclareMathOperator{\Hom}{Hom}

\newcommand{\cY}{\mathcal{Y}}

\newcommand\fd{\mathfrak d}
\newcommand\fD{\mathfrak D}
\newcommand\fm{\mathfrak m}

\newcommand\bk{\mathbf{k}}

\newcommand{\CC}{\mathbb{C}}
\newcommand{\NN}{\mathbb{N}}
\newcommand{\PP}{\mathbb{P}}
\newcommand{\RR}{\mathbb{R}}
\newcommand{\ZZ}{\mathbb{Z}}
\newcommand{\AAA}{\mathbb{A}}

\newcommand{\vir}{\text{\rm vir}}
\newcommand{\Ntrop}{{N}_{\mathsf{trop}}^{\Delta,\bk}}

\newcommand{\pt}{{\mathsf{pt}}} 

\DeclareSymbolFont{matha}{OML}{txmi}{m}{it}
\DeclareMathSymbol{\varv}{\mathord}{matha}{118}

\makeatletter
\newsavebox{\@brx}
\newcommand{\llangle}[1][]{\savebox{\@brx}{\(\m@th{#1\big\langle}\)}%
  \mathopen{\copy\@brx\kern-0.5\wd\@brx\usebox{\@brx}}}
\newcommand{\rrangle}[1][]{\savebox{\@brx}{\(\m@th{#1\big\rangle}\)}%
  \mathclose{\copy\@brx\kern-0.5\wd\@brx\usebox{\@brx}}}
\makeatother

\newcommand{\longhookrightarrow}{\lhook\joinrel\longrightarrow}

\setlist[enumerate,1]{label={\rm(\arabic*)}, ref={\rm\arabic*}}

\newcommand{\supth}[1]{\ensuremath{#1^{\mathrm{th}}}}
\newcommand{\supst}[1]{\ensuremath{#1^{\mathrm{st}}}}


\EpigaVolumeYear{10}{2026} \EpigaArticleNr{6} \ReceivedOn{March 26, 2025}
\InFinalFormOn{July 7, 2025}
\AcceptedOn{October 8, 2025}

\title{Refined curve counting with descendants and quantum mirrors}
\titlemark{Refined curve counting with descendants and quantum mirrors}

\author{Patrick Kennedy-Hunt}
\address{University of Cambridge, Faculty of Mathematics,
Wilberforce Road,
Cambridge CB3 0WA,
United Kingdom} 
\email{pfk21@cam.ac.uk}

\author{Qaasim Shafi}
\address{Heidelberg University, Mathematikon, 
Im Neuenheimer Feld 205, 
69120  Heidelberg, Germany}
\email{mshafi@mathi.uni-heidelberg.de}

\author{Ajith Urundolil Kumaran}
\address{Massachusetts Institute of Technology, Department of Mathematics, 
Simons Building, 
77 Massachusetts Avenue, Cambridge, MA 02139-4307, USA}
\email{ajith270@mit.edu}

\authormark{P.~Kennedy-Hunt, Q.~Shafi and A.~Urundolil Kumaran}

\AbstractInEnglish{Given a log Calabi--Yau surface $(Y,D)$, Bousseau has constructed a quantization of the mirror algebra of this pair. 
We give a formula for structure constants of this quantization in terms of higher-genus descendant logarithmic Gromov--Witten invariants of $(Y,D)$. Our result generalises the weak Frobenius structure conjecture for surfaces to the $q$-refined setting, and is proved by relating these invariants to counts of quantum broken lines in the associated quantum scattering diagram.}

\MSCclass{14N35, 14J33, 14A21, 14N10}
\KeyWords{Logarithmic Gromov--Witten invariants, quantum scattering,
mirror symmetry, tropical geometry}


\acknowledgement{P.\,K.-H.\ was supported by an Early Career Fellowship at the University of Sheffield awarded by the London Mathematical Society, and Peterhouse, Cambridge. Q.\,S.\ was supported by UKRI Future Leaders Fellowship through grant number MR/T01783X/1 and by the starting grant `Correspondences in enumerative geometry: Hilbert schemes, K3 surfaces and modular forms', No.~101041491 of the European Research Council.}

\begin{document}



\maketitle

\begin{prelims}

\DisplayAbstractInEnglish

\bigskip

\DisplayKeyWords

\medskip

\DisplayMSCclass

\end{prelims}


\newpage

\setcounter{tocdepth}{1}

\tableofcontents


\section{Introduction}

This paper lies at the intersection of logarithmic enumerative geometry and quantization in mirror symmetry. Logarithmic Gromov--Witten invariants are, to first approximation, counts of algebraic curves of fixed degree and genus in a smooth projective variety $Y$. These curves are required to have prescribed tangencies to a given divisor $D$, and pass through some collection of cycles. Similarly to ordinary Gromov--Witten invariants, they have a wide range of applications across algebraic geometry. These invariants and their generalisations have proved crucial in mirror symmetry constructions, see \cite{gross2021intrinsic, gross2015mirror}; they provide insights into moduli spaces, for example the moduli space of curves, see \cite{graber2005relative,ranganathan2024logdr}; they enjoy connections to tropical geometry, see \cite{bousseau2018tropical,MandelRuddat,NishSieb06,graefnitz2022tropical}; and they are a key tool for computing ordinary Gromov--Witten invariants via the degeneration formula, see \cite{pandharipande2017gromov,okounkov2009gromov,maulik2006topological}.

In this paper, we connect the higher-genus (descendant) logarithmic Gromov--Witten theory of log Calabi--Yau surfaces to a combinatorial object known as a \emph{quantum scattering diagram}. Scattering diagrams play a key role in mirror symmetry, see \cite{gross2011affine}, and also appear in the study of cluster algebras, see \cite{gross2018canonical}, and Bridgeland stability conditions, see \cite{bridgeland2017scattering}. On the one hand, due to the combinatorial nature of quantum scattering diagrams, our result provides a technique for computing these invariants. On the other hand, the quantum scattering diagram is directly connected to the \emph{quantum mirror}, the quantization of the Gross--Hacking--Keel mirror to this log Calabi--Yau surface, see \cite{gross2015mirror,bousseau2020quantum}, and our main result proves a generalisation of the weak Frobenius structure conjecture for surfaces (see Section~\ref{section : gross-siebert program}), a result in mirror symmetry formulated within the Gross--Siebert program.

Fix a tuple $m = (m_1,\dots,m_n)$ of elements in $\ZZ^2$. From this tuple one can form the following data: a log Calabi--Yau surface $Y_m$ and a quantum scattering diagram $S(\hat\fD_m)$. Also fix  a tuple $\vartheta = (r_1,\dots,r_s)$ of vectors in $\ZZ^2$. We will associate to this data two sets of enumerative invariants, one algebro-geometric and one combinatorial. We now explain these constructions and the associated invariants in more detail.

\subsection{Logarithmic Gromov--Witten invariants of $\boldsymbol{(Y_m,\partial Y_m)}$}

Much of the geometry of toric varieties can be encoded combinatorially through their fans. For a projective toric surface, the fan consists of vectors in $\ZZ^2$, called rays, each one corresponding to a component of the toric boundary divisor. For example, the fan of $\PP^2$ consists of three rays, each associated with one of the projective coordinate lines.

Let $\overline{Y}_m$ be a projective toric surface whose fan contains the rays $-m_1,\dots,-m_n$ (as well as $r_1,\dots, r_s$). We construct $Y_m$ by blowing up $\overline{Y}_m$ at $n$ points, one on each boundary divisor corresponding to $-m_i$. We take the divisor $\partial Y_m$ to be the strict transform of the toric boundary, which is an anticanonical divisor. Each vector $r_i$ prescribes a tangency condition for a curve in $Y_m$ with respect to $\partial Y_m$. Specifically, we will be interested in curves which hit the strict transform of the toric boundary divisor corresponding to $r_i$ with tangency the lattice length of $r_i$.

Logarithmic Gromov--Witten invariants of $(Y_m,\partial Y_m)$ are defined as integrals on the moduli space of stable logarithmic maps to $(Y_m,\partial Y_m)$; see \cite{chen2011stable,abramovich2011stable,gross2012logarithmic}. This moduli space is a compactification of the space of maps from smooth, genus $g$, $(s+1)$-marked curves to $Y_m$, of fixed degree, in our case with tangencies to $\partial Y_m$ at the first $s$ markings given by $\vartheta = (r_1,\dots,r_s)$. The integrand consists of classes on the moduli space that encode geometric constraints on the curve. These classes come in two flavours; they either require the image of the curve to pass through certain cycles in $Y_m$ or are \emph{tautological classes} constraining the geometry of the underlying source curve. 

For any genus $g \geq 0$ and tuple $p = (p_1,\dots,p_n)$ of non-negative integers such that $\sum_{i=1}^{n} p_i m_i = \sum_{j=1}^s r_j$, we have the logarithmic Gromov--Witten invariant $$N_{g,\vartheta}^{\beta_p} = \int_{[\mathsf{M}_{g,\vartheta}(Y_m,\beta_p)]^{\vir}}(-1)^g\lambda_g \ev^* (\mathsf{pt}) \psi^{s-2}.$$
The degree of the curves here is $\beta_p$, a curve class which realises the tangency conditions $\vartheta$ and hits the $\supth{i}$ exceptional divisor with multiplicity $p_i$. Here $\ev^* (\pt)$ is the pullback of the class of a point in $Y_m$ under the evaluation morphism $\ev \colon \mathsf{M}_{g,\vartheta}(Y_m,\beta_p) \rightarrow Y_m$ which takes a parametrised curve in $Y_m$ and outputs the image of its $\supst{(s+1)}$ marking in $Y_m$. This corresponds to demanding that the curve pass through a general point in $Y_m$. On the other hand, $\lambda_g$ and $\psi$ are examples of tautological classes; see Section~\ref{label : tautological}. The class~$\lambda_g$ relates to the geometry of the local log Calabi--Yau threefold $Y_m \times \AAA^1$; 
see Section~\ref{section : threefolds}. The $\psi$-classes, often referred to as \emph{descendants}, are most well known in the context of Witten's conjecture, see \cite{witten1991gravity}, which connects integrals of $\psi$-classes on the moduli space of curves to the KdV hierarchy; see \cite{kontsevich1992intersection}.

\subsection{Quantum broken line counts in quantum scattering diagrams}

Strominger, Yau, and Zaslow \cite{strominger1996mirror} proposed a picture of mirror symmetry where mirror Calabi--Yau manifolds should exhibit dual special Lagrangian torus fibrations over a common base. In order to encode ``instanton corrections'', Kontsevich and Soibelman \cite{kontsevich2006affine} posited that, at least in dimension two, constructing the mirror could be reduced to computing commutators of certain families of automorphisms of a two-dimensional torus. This can in fact be encoded in a combinatorial object called a scattering diagram (on the base), and in the surface situation, this was used to build the mirror to a log Calabi--Yau surface; see \cite{gross2015mirror}. Moreover, in \cite[Section 11.8]{kontsevich2006affine}, it is remarked that these scattering diagrams admit a natural $q$-deformation, relating instead to formal families of automorphisms of the two-dimensional \emph{quantum} torus, a natural non-commutative deformation of the two-dimensional algebraic torus. 

Both scattering and quantum scattering diagrams consist of half-lines, called rays, in $\RR^2$ equipped with certain functions called Hamiltonians. These encode families of automorphisms of the torus, respectively quantum torus. We consider the quantum scattering diagram, $S(\hat{\fD}_m)$, built from the initial rays $-\RR_{\geq 0}m_i$ for $i=1,\dots,n$ with certain Hamiltonians \eqref{eq : initial hamiltonians}, possibly with some additional rays to make the diagram \emph{consistent}; see Section~\ref{Section : Quantum scattering diagrams}. 

Drawing an analogy between $Y_m$ and $S(\hat{\fD}_m)$, we will be interested in counts of \emph{quantum broken lines} in $S(\hat{\fD}_m)$, in some sense the combinatorial counterpart to curves in $Y_m$. Quantum broken lines (see Definition~\ref{def : quantum broken line count} and Figure~\ref{Figure : Quantum scattering}) are piecewise-linear curves in $\RR^2$ which bend only at the rays of the quantum scattering diagram in a way controlled by the Hamiltonian of that ray. Each quantum broken line $\gamma$ has the following features:  
\begin{itemize}
    \item It has an asymptotic starting direction in $\mathbb{Z}^2$, oriented outward along the infinite segment.  
    \item It ends at some point $Q \in \mathbb{R}^2$. 
    \item It has a final direction $\varv(\gamma) \in \ZZ^2$, oriented outward from $Q$.
    \item It carries a final monomial $c(\gamma)$ depending on the parameters $t_1,\dots,t_n$ and the quantum pa\-ra\-meter~$q$.  
\end{itemize}

\begin{figure}
    \centering
    \includegraphics[width=11cm]{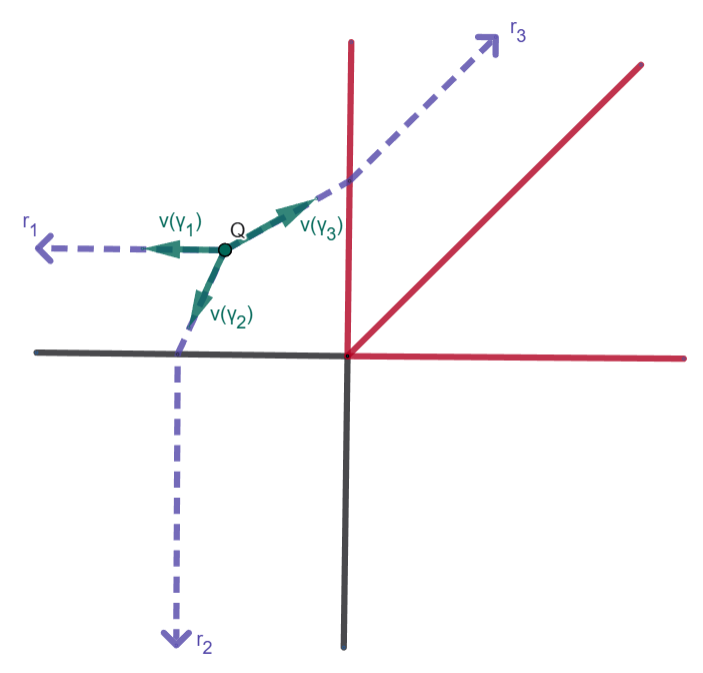}
    \caption{A pictorial representation of an example of a summand in \eqref{scattering invariant} for $m=((1,0),(0,1))$. The black rays are the initial rays of $S(\hat{\fD}_m)$; the red rays are rays added to make the diagram consistent (see \cite[Section 3.2.2]{bousseau2020quantum}). There are three broken lines $\gamma_1,\gamma_2,\gamma_3$, dashed, in purple. Each broken line ends at the balanced point $Q \in \RR^2$ and has asymptotic start directions $\vartheta = (r_1,r_2,r_3)$ and final directions $(\varv(\gamma_1),\varv(\gamma_2),\varv(\gamma_3))$. }
    \label{Figure : Quantum scattering}
\end{figure}

We define a combinatorial invariant, depending on our chosen tuple of vectors $\vartheta = (r_1,\dots, r_n)$, 
\begin{equation}\label{scattering invariant}
        \left\langle \hat{\vartheta}_{r_1},\dots, \hat{\vartheta}_{r_s}\right\rangle = \sum_{(\gamma_1,\dots,\gamma_s)} c(\gamma_1) \cdots c(\gamma_s) \prod_{2 \leq i < j \leq s} q^{\frac{1}{2}\varv(\gamma_i) \wedge \varv(\gamma_j)},   
\end{equation}
 where the sum is over tuples $(\gamma_1,\dots, \gamma_s)$ of quantum broken lines in the quantum scattering diagram $S(\hat{\fD}_m)$ with start directions $\vartheta = (r_1,\dots,r_s)$, meeting at a fixed, balanced point $Q \in \RR^2$, \textit{i.e.} such that $\sum_{i=1}^s \varv(\gamma_i) = 0$.

\begin{maintheorem}\label{thm:main}
    Fix tuples $m=(m_1,\dots,m_n)$ and $\vartheta = (r_1,\dots, r_s)$ of vectors in $\ZZ^2$. The generating function for the higher-genus descendant logarithmic Gromov--Witten invariants is determined by a combinatorial invariant involving counts of quantum broken lines. Precisely, there is an equality 
    $$\sum_{p}  \left(\sum_{g \geq 0} N_{g,\vartheta}^{\beta_p}u^{2g}\right)t_1^{p_1} \cdots t_n^{p_n} = \frac{1}{(s-1)!}\sum_{\omega \in \Omega_s}\left\langle \hat{\vartheta}_{r_{\omega(1)}},\ldots,\hat{\vartheta}_{r_{\omega(s)}} \right\rangle$$
after the change of variable $q= e^{iu}$. On the left-hand side, we sum over all $p=(p_1,\dots,p_n) \in \NN^n$ such that $\sum_{i=1}^{n} p_i m_i = \sum_{j=1}^s r_j$, and on the right-hand side, $\Omega_s$ denotes the set of cyclic permutations on $s$ elements.
\end{maintheorem}

Theorem~\ref{thm:main} suggests a method for computing higher-genus descendant logarithmic Gromov--Witten invariants of $(Y_m,\partial Y_m)$. The results are also linked to the GW/DT correspondence, see \cite{maulik2006gromov,maulik2006gromov2}, more specifically its logarithmic enhancement, see \cite{maulik2025logarithmicenumerativegeometrycurves} (for log Calabi--Yau surfaces). This correspondence suggests that two intersection-theoretic frameworks for curve counting, viewing curves through their parametrisations (Gromov--Witten theory) or through their defining equations (Donaldson--Thomas theory), should encode equivalent information. As far as we know, this correspondence has no link to mirror symmetry; however, an exponential change of variables as in Theorem~\ref{thm:main} also appears in the GW/DT correspondence. Moreover, from our theorem, one can see that the answer has rationality properties exactly predicted by the (conjectural) logarithmic GW/DT correspondence. If we take the conjecture as true, this gives the first computation of the descendant series of a non-toric target.

Furthermore, our results show that the, \textit{a priori} mysterious, agreement of descendant counts with structure constants from intrinsic surface mirrors of the Gross--Siebert program seems to persist to the $q$-refined setting. We explain this in more detail below.

\subsection*{Interpretation in the Gross--Siebert program}\label{section : gross-siebert program} The \emph{intrinsic mirror} construction of the Gross--Siebert program, see \cite{gross2021intrinsic,gross2022canonical}, assigns to a log Calabi--Yau pair $(X,D)$ a ring $R$, which is understood to be the ring of functions on the mirror. The authors conjecture, see~\cite[Conjecture 9.2]{gross2021intrinsic}, a connection, the \emph{weak Frobenius structure conjecture}, between the structure constants of the mirror algebra and the \emph{genus zero} descendant logarithmic Gromov--Witten theory of $(X,D)$. As both sides have connections to scattering diagrams, this is a plausible speculation. 
The weak Frobenius structure conjecture has since been proved in complete generality, see \cite{johnston2022comparison}, and has had applications, see \cite{mandel2019fanomirrorperiodsfrobenius,johnston2025quantumperiodstoricdegenerations}, to the Fanosearch program; see \cite{coates2013mirror}.

In this surface situation, Gross, Hacking, and Keel constructed a mirror equipped with the structure of a formal Poisson variety; see~\cite{gross2015mirror}. This is exactly the context in which one can speculate about the existence of a deformation quantization of the mirror algebra: a non-commutative algebra depending on a quantum parameter $q$ in which the non-commutativity of the product structure relates to the Poisson bracket. This deformation quantization has since been constructed; see \cite{bousseau2020quantum}. As alluded to, the right-hand side of Theorem~\ref{thm:main} can be interpreted as structure constants of this quantum mirror, and so our theorem connects structure constants of the \emph{quantum} mirror algebra to the \emph{higher-genus} descendant logarithmic Gromov--Witten theory of the surface.

Let $(Y,D)$ be a Looijenga pair (see Section~\ref{Section : Looijenga pairs}). Associated to this pair is a non-commutative $\mathbb{C}$-algebra given by the \emph{quantum mirror} to $(Y,D)$. This algebra has a basis given by $\hat{\vartheta}_r$ for $r$ an integral point in the tropicalisation $\mathsf{Trop}(Y,D) \cong \ZZ^2$ of $(Y,D)$. Let
$$\llangle \hat{\vartheta}_{r_1},\ldots,\hat{\vartheta}_{r_s}\rrangle$$ denote the coefficient of $1=\hat{\vartheta}_{(0,0)}$ in the algebra product of the basis elements $\hat{\vartheta}_{r_1},\ldots,\hat{\vartheta}_{r_s}$; see \cite[Theorem 5.1]{bousseau2020quantum}. Unlike in the classical setting, this bracket is not symmetric.

\begin{maincorollary}
    After averaging over cyclic permutations and making the change of variables $q=e^{iu}$, there is always an equality
    $$\sum_{\beta} z^{\beta} \left(\sum_{g \geq 0} N_{g,\vartheta}^{\beta}u^{2g}\right) = \frac{1}{(s-1)!}\sum_{\omega \in \Omega_s}\llangle \hat{\vartheta}_{r_{\omega(1)}},\ldots,\hat{\vartheta}_{r_{\omega(s)}} \rrangle.$$
\end{maincorollary}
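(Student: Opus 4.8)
The plan is to deduce the Corollary from Theorem~\ref{thm:main} by re‑expressing both sides in the language of the quantum mirror algebra of $(Y,D)=(Y_m,D)$.

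First, on the left‑hand side I would unwind the indexing of curve classes. For the fixed tangency profile $\vartheta=(r_1,\dots,r_s)$, a class $\beta$ supports a nonzero invariant $N_{g,\vartheta}^{\beta}$ only if $\beta=\beta_p$ for some $p=(p_1,\dots,p_n)\in\NN^n$ with $\sum_i p_i m_i=\sum_j r_j$; under the standard identification of the relevant sublattice of curve classes with $\NN^n$ via the exceptional curves (so that $z^{\beta_p}\leftrightarrow t_1^{p_1}\cdots t_n^{p_n}$), the series $\sum_\beta z^\beta\bigl(\sum_g N_{g,\vartheta}^{\beta}u^{2g}\bigr)$ is literally the left‑hand side of Theorem~\ref{thm:main}.

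Second, and this is the crux, I would identify the combinatorial bracket $\langle \hat{\vartheta}_{r_1},\dots,\hat{\vartheta}_{r_s}\rangle$ appearing in Theorem~\ref{thm:main} with the structure constant $\llangle \hat{\vartheta}_{r_1},\dots,\hat{\vartheta}_{r_s}\rrangle$ of the quantum mirror, i.e.\ with the coefficient of $\hat{\vartheta}_{(0,0)}$ in the iterated product $\hat{\vartheta}_{r_1}\cdots\hat{\vartheta}_{r_s}$ in the non‑commutative algebra of \cite{bousseau2020quantum}. By \cite[Theorem~5.1]{bousseau2020quantum} this product is computed by quantum broken lines: the coefficient of $\hat{\vartheta}_v$ in a product counts $s$‑tuples of quantum broken lines with the prescribed initial directions ending at a chosen generic basepoint, with final directions summing to $v$, weighted by the broken‑line coefficients $c(\gamma_i)$. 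Taking $v=(0,0)$ forces the endpoint to be a balanced point $Q$, matching the condition in the definition of $\langle-\rangle$ in the excerpt, and the ordering‑dependent $q$‑monomial $\prod_{2\le i<j\le s} q^{\frac{1}{2}s(\gamma_i)\wedge s(\gamma_j)}$ is precisely the cumulative $q$‑twist produced by expanding the iterated binary product $(\cdots((\hat{\vartheta}_{r_1}\cdot\hat{\vartheta}_{r_2})\cdot\hat{\vartheta}_{r_3})\cdots)$ via associativity. I would also record that this count is independent of the generic basepoint by the usual consistency/wall‑crossing argument for quantum scattering diagrams, so it is legitimate to evaluate it at the balanced point $Q$ used in Theorem~\ref{thm:main}.

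Combining these two identifications with Theorem~\ref{thm:main} yields the asserted equality, with the symmetrization $\frac{1}{(s-1)!}\sum_{\omega\in\Omega_s}$ transported verbatim from the statement of Theorem~\ref{thm:main}; the phrase \emph{``after averaging over cyclic permutations''} reflects that, unlike the left‑hand side, the bracket $\llangle-\rrangle$ is only invariant under (cyclic, indeed only partial) permutations, so the unsymmetrized identity $z^\beta(\cdots)=\llangle\hat{\vartheta}_{r_1},\dots,\hat{\vartheta}_{r_s}\rrangle$ holds only after this averaging. \textbf{The main obstacle} is the bookkeeping in the second step: one must carefully reconcile the conventions of \cite{bousseau2020quantum} (order of factors, sign of the symplectic pairing, normalization of broken‑line coefficients, and the placement of the basepoint relative to the initial rays $-\RR_{\ge 0}m_i$) with those fixed in Sections~\ref{Section : Looijenga pairs}--\ref{Section : Quantum scattering diagrams}, so that the $q$‑powers and signs match on the nose rather than merely up to an overall ambiguity.
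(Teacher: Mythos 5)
Your proposal matches the paper's route: the left side reduces to the left side of Theorem~\ref{thm:main} under the identification $z^{\beta_p}\leftrightarrow t_1^{p_1}\cdots t_n^{p_n}$, and the right side reduces to the right side of Theorem~\ref{thm:main} by identifying the combinatorial bracket $\langle\hat{\vartheta}_{r_1},\dots,\hat{\vartheta}_{r_s}\rangle$ with the algebra structure constant $\llangle\hat{\vartheta}_{r_1},\dots,\hat{\vartheta}_{r_s}\rrangle$. In the paper this last identification has two halves: Lemma~\ref{Lemma : structureconstant} shows, in the paper's own conventions, that the bracket computes the coefficient $c^0_{r_1\dots r_s}$ of $\hat{\vartheta}_0$ in the product on $R_{(Y_m,\partial Y_m)}$ (precisely by the iterated quantum-torus expansion you describe, where the constraint $\sum s(\gamma_i)=0$ reduces $\prod_{1\le i<j\le s}$ to $\prod_{2\le i<j\le s}$), and a subsequent remark bridges this to Bousseau's quantum mirror.

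The ``main obstacle'' you flag — reconciling conventions with \cite{bousseau2020quantum} — is a genuine issue, and you leave it unresolved. The quantum mirror of \cite{bousseau2020quantum} is built from the \emph{canonical} scattering diagram, not the consistent completion $S(\hat{\fD}_{\fm})$ of Definition~\ref{Definition : scattering completion}, and the product in \cite[Theorem 3.11]{bousseau2020quantum} carries an extra factor involving a piecewise-linear function $\varphi$. The paper's resolution (in the remark after Lemma~\ref{Lemma : structureconstant}) is to invoke \cite[Proposition 4.10]{bousseau2020quantum}: the two scattering diagrams are related by a piecewise-linear map, broken lines correspond under it, and the PL function $\varphi$ precisely cancels the discrepancy in the two product formulas, so the structure constants agree. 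Without stating and using this comparison, the identification $\langle-\rangle=\llangle-\rrangle$ remains an assertion rather than a proof; that is the one missing ingredient in your argument.
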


This follows from Theorem~\ref{thm:main} due to the fact that the product structure on the quantum mirror algebra is defined in terms of the broken line counts appearing in \eqref{scattering invariant}. For more detail, see Section~\ref{Subsection : product structure}.

The proof of Theorem~\ref{thm:main} follows a higher-genus version of the strategy of the proof of the weak Frobenius structure conjecture in \cite{mandel2021theta}. We use the technique of factoring the quantum scattering diagram, see \cite{gross2010vertex}, as well as the connection between the Hamiltonians in the consistent quantum scattering diagram and tropical curves, see \cite{bousseau2020quantumtrop,filippini2015block}, to relate the right-hand side to tropical curve counts with an $s$-valent vertex. The refined multiplicity of the $s$-valent vertex appears due to the non-commutative product rule of the quantum torus, after symmetrising. We then use the degeneration formula in logarithmic Gromov--Witten theory, together with the refined descendant tropical correspondence theorem of \cite{kennedyhunt2023tropical} to connect the left-hand side to the same tropical curve counts. As in \cite{kennedyhunt2023tropical}, there are extra subtleties in the degeneration formula in this higher-genus, descendant setting. 

\subsection{Future directions}
\subsubsection{Threefolds}\label{section : threefolds} Theorem~\ref{thm:main} can be viewed as a result for the ``local log Calabi--Yau surface threefold", $Y_m \times \AAA^1$. The virtual dimension of the moduli space in the case of threefolds is independent of the genus, and invariants on $Y_m \times \AAA^1$, defined via virtual localisation, are exactly invariants of $Y_m$ with a $\lambda_g$ insertion. One future direction would be to generalise to the situation of honest threefolds, prompting questions about a higher-genus analogue of \cite{arguz2022higher}.
\subsubsection{K3 surfaces} In a different direction, one could combine Theorem~\ref{thm:main} with the use of type III degenerations of K3 surfaces to probe their enumerative geometry. The components of the special fibre of such a degeneration will be of the form $Y_m$, for example in the quartic K3 degeneration. The degeneration formula will be more subtle, see~\cite[Section 4]{maulik2010curves}, due to the presence of the reduced virtual class, particularly in the case of non-primitive classes, see \cite{blomme2024correlatedgromovwitteninvariants}. However, even in the case of primitive curve classes, if the scattering computations can be understood well, this provides an avenue to produce new calculations, for example, a generalisation of~\cite[Theorem 3]{maulik2010curves} with descendants.

\subsection{Related work}\begin{itemize}
    \item In \cite{bousseau2024looijenga}, the authors study five enumerative theories associated to a log Calabi--Yau surface. They propose correspondences relating logarithmic, local, and open Gromov--Witten invariants, as well as quiver DT invariants and BPS invariants associated to this log Calabi--Yau surface. In particular, in~\cite[Proposition 4.2]{bousseau2024looijenga}, a proof is sketched relating higher-genus invariants with a $\lambda_g$ insertion of log Calabi--Yau surfaces to counts of tuples of three quantum broken lines. 
    \item In \cite{grafnitz2025enumerative}, the authors study the geometry of a smooth Fano surface relative to a smooth anticanonical divisor. An interpretation for a certain quantum theta function is given in terms of a $q$-refinement of the open mirror map  defined by quantum periods of mirror curves for outer Aganagic--Vafa branes on the associated local threefold. The proof of \cite[Proposition 3.12]{grafnitz2025enumerative}, in particular, relates to Theorem~\ref{thm:main} and~\cite[Theorem A]{kennedyhunt2023tropical}.
\end{itemize}

\subsection{Acknowledgements}
We thank Sam Johnston, Dhruv Ranganathan, Yannik Schuler, and Michel
van Garrel for a number of helpful conversations. We thank Tim
Gr\"afnitz for helpful comments on a previous version. We thank Dhruv
Ranganathan and Calla Tschanz for helpful comments on the
introduction.

\section{Logarithmic Gromov--Witten theory and tropical curves}\label{Section : Statement + logGW}

\subsection{Looijenga pairs}\label{Section : Looijenga pairs}
The basic geometry in this paper is a Looijenga pair: that is a pair $(Y,D)$ consisting of a smooth complex projective surface $Y$, together with a simple normal crossings, singular, reduced, anticanonical divisor $D$ on~$Y$. 

\begin{proposition}[\textit{cf.} {\cite[Proposition 1.3]{gross2015mirror}}]
  Given a Looijenga pair $(Y,D)$, there exist, after potentially carrying out corner blow-ups, a toric surface $\overline{Y}_m$ and a blow-down morphism
  $$\nu\colon Y \longrightarrow \overline{Y}_m$$
  with $\nu$ the blow-up in a collection of points in the interior of the toric boundary.
\end{proposition}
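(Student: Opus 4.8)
The plan is to induct on the ``charge'' of the pair, at each step contracting a single interior $(-1)$-curve.

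First I would collect preliminaries. Since $-K_Y = D$ is effective and nonzero, $Y$ has Kodaira dimension $-\infty$; moreover a relatively minimal ruled surface over a base of positive genus admits no anticanonical cycle of rational curves (every rational curve lies in a fibre, and intersecting with the fibre class obstructs $D \equiv -K_Y$), so $Y$ is rational. Writing $n$ for the number of components of the cycle $D$, I would introduce the charge $Q(Y,D) := \rho(Y) + 2 - n = 12 - n - K_Y^2$, and record that $Q$ is a nonnegative integer, that it is unchanged by a corner blow-up (blowing up a node of $D$, which produces an SNC anticanonical cycle with $n+1$ components), that it drops by exactly $1$ under blowing up a smooth point on a single component of $D$, and that it vanishes precisely when $(Y,D)$ is already a toric pair with $D$ the toric boundary.

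The inductive step rests on the following claim: if $Q(Y,D) > 0$, then after finitely many corner blow-ups the surface admits an \emph{interior $(-1)$-curve}, i.e.\ a $(-1)$-curve $E$ with $E \not\subseteq D$ and $E \cdot D = 1$, so that $E$ avoids the nodes of $D$ and meets exactly one component $D_i$, transversally. Granting this, contracting $E$ by $\nu_1 \colon Y \to Y_1$ produces a Looijenga pair $(Y_1, D_1 := \nu_1(D))$ — the image cycle is still SNC anticanonical, since $E$ met $D$ at a single smooth point — with $Q(Y_1, D_1) = Q(Y,D) - 1$. By induction on $Q$, the base case $Q = 0$ being the toric case itself, $(Y_1, D_1)$ admits a toric model $Y_1 \to \overline{Y}_m$ after possibly further corner blow-ups; those further corner blow-ups are blow-ups of nodes of $D_1$, hence of nodes of $D$, so they lift to corner blow-ups of $Y$, and the contraction of $E$ lifts likewise. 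Composing then exhibits a corner blow-up of $Y$ mapping to $\overline{Y}_m$ by a chain of blow-ups each contracting an interior $(-1)$-curve, hence each occurring at a point in the interior (non-torus-fixed locus) of the toric boundary: this is precisely a toric model of $(Y,D)$.

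The crux, and the main obstacle, is the interior-$(-1)$-curve claim together with the genuine necessity of corner blow-ups there: \emph{a priori} every $(-1)$-curve on $Y$ could be a component of $D$, or could meet $D$ with multiplicity $\geq 2$ (passing through a node, or tangent to some $D_i$), and none of these can be contracted while staying among Looijenga pairs. The resolution is that these pathologies are removed by blowing up the relevant nodes — after enough corner blow-ups the offending curve is separated from the cycle and becomes an honest interior $(-1)$-curve. Proving this carefully, via a combinatorial analysis of the cyclic self-intersection sequence of $D$ and of how effective curves of negative self-intersection can meet the cycle, and checking that only finitely many corner blow-ups are ever needed, is where the real work lies; once that is in place termination is automatic, since $Q$ strictly decreases at each contraction.
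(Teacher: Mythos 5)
The paper merely cites this result to Gross--Hacking--Keel and supplies no proof of its own, so there is nothing in-paper to compare your proposal against; what follows is an assessment of the proposal itself.

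Your overall scheme --- introduce the charge $Q(Y,D) = \rho(Y) + 2 - n = 12 - n - K_Y^2$, observe that it is unchanged by corner blow-ups, and induct on $Q$ by contracting interior $(-1)$-curves --- is the correct skeleton, and is essentially how the result is established in the literature. One small correction first: an interior blow-up \emph{increases} $Q$ by $1$ (since $K_Y^2$ drops by $1$ while $n$ is fixed), so that $Q$ decreases under the contraction in your inductive step; your preliminary statement that $Q$ ``drops\ldots under blowing up'' has the sign backwards, though the inductive step you actually run uses the correct direction.

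The substantive gap is in your proposed resolution of the crux. You suggest that a $(-1)$-curve $E$ that meets $D$ at a node, or with higher tangency, becomes an honest interior $(-1)$-curve once one blows up the relevant nodes. It does not: if $E$ passes through a node of $D$ and one blows up that node, the strict transform of $E$ has self-intersection $E^2 - 1 = -2$ and is no longer exceptional. Corner blow-ups therefore do not ``separate'' a badly-placed $(-1)$-curve into a usable one; they must instead produce a \emph{different} interior $(-1)$-curve elsewhere in the configuration, and proving that this happens after finitely many corner blow-ups whenever $Q>0$ --- equivalently, ruling out the would-be minimal counterexamples --- is the genuine content of the proposition, which your outline identifies but whose sketch would not survive being made precise. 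I would also flag that $Q \ge 0$, and the characterisation of $Q = 0$ as the toric case, are not free: the former needs, for instance, that the intersection form restricted to the span of the cycle components has radical of dimension at most $2$, and the latter is itself a small classification lemma. Neither is quite the quick ``record that\ldots'' your phrasing suggests.
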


Thus, to study all Looijenga pairs, it suffices to study the geometry described in Setting~\ref{setting:basic}. 

\begin{setting}\label{setting:basic}
    Fix a tuple $m = (m_1,\dots,m_n)$ of elements of $M = \ZZ^2$. After adding more elements if necessary to $m$ to form a tuple $m'$ which spans $\RR^2$, there is a unique two-dimensional fan $\Sigma_m$ with rays $-\RR_{\geq 0}m_j$ for $m_j \in m'$. We write $\overline{Y}_m$ for the toric surface with fan $\Sigma_m$. Write $D_{m_j}$ for the component of the toric boundary of $Y_m$ associated, under the toric dictionary, to the ray $-\RR_{\geq 0} m_j$.  
    
    For each $m_j$ for $j=1,\dots,n$, choose a point $x_j\in D_j$ such that $x_i \ne x_j$ whenever $i\ne j$, and define a surface~$Y_m$ to be the blow-up $\nu\colon Y_m \rightarrow \overline{Y}_m$ at the points $x_1,\dots,x_n$. Let $E_j$ denote the exceptional divisor over $x_j$ and $\partial Y_m$ the strict transform of the toric boundary divisor. Notice $(Y_m,\partial Y_m)$ is a Looijenga pair.
\end{setting}

In Setting~\ref{setting:basic}, a choice of tuple $m'$ extending $m$ was made. Since logarithmic Gromov--Witten invariants are insensitive to logarithmic modifications, see~\cite[Theorem 1.1.1]{AWbirational}, our main theorem is insensitive to this choice, and we fix a choice without further comment. 

\subsection{Logarithmic Gromov--Witten theory}
Let $\vartheta = (r_1,\dots r_s)$ be a tuple of vectors in $M=\ZZ^2$, and replace $m'$ by $m'~\cup~\vartheta$ so that every $\RR_{\geq 0}r_i$ is a ray in the fan of $\overline{Y}_m$. We say that a map from an $(s + 1)$-pointed curve to ${Y}_m$ has \emph{tangency given by $\vartheta$} if the $\supth{i}$ marked point hits the strict transform of the toric boundary divisor corresponding to $r_i$ with tangency given by the lattice length of $r_i$, in other words, the maximal positive integer $\ell$ such that one can write $r_i = \ell \overline{r}_i$ for $\overline{r}_i$ a vector in $M$, called the \textit{direction} of $r_i$.

The moduli space parametrising $(s+1)$-pointed genus $g$ stable maps to $Y_m$ with curve class $\beta$ and tangency given by $\vartheta$ at the first $s$ markings is not proper. The moduli space of stable logarithmic maps $\mathsf{M}_{g,\vartheta}(Y_m,\beta)$ is a compactification; see \cite{abramovich2011stable,chen2011stable,gross2012logarithmic}. 

Write $\overline{\mathcal{M}}_{g,s+1}$ for the moduli space of stable genus $g$ curves with $s+1$ marked points. This space comes equipped with universal curve $\mathsf{pr}\colon \mathcal{C} \rightarrow \overline{\mathcal{M}}_{g,s+1}$ and a forgetful morphism
$$\pi'\colon\mathsf{M}_{g,\vartheta}(Y_m,\beta) \longrightarrow \overline{\mathcal{M}}_{g,s+1}.$$
The moduli space of stable curves carries two flavours of tautological bundle of import to us: 
\begin{itemize}\label{label : tautological}
\item The first is the \textit{Hodge bundle} $\mathbb{E}_g = \mathsf{pr}_\star \omega_p$, where $\omega_p$ is the relative dualising sheaf of $p$; we write $\lambda_g = c_g(\mathbb{E}_g)$. 
\item Note that $\overline{\mathcal{M}}_{g,s+1}$ carries $s+1$ tautological sections identifying the marked points. Denote the last section by $S$, and define
  $$\psi = c_{1}\left(S^\star \omega_p\right).$$ 
\end{itemize} 
Both classes can be pulled back along $\pi'$ to define tautological classes on $\mathsf{M}_{g,\vartheta}(Y_m,\beta)$ which, by abuse of notation, we also call  $\psi$ and $\lambda_g$. 

We have an evaluation morphism $\ev \colon \mathsf{M}_{g,\vartheta}(Y_m,\beta) \rightarrow Y_m$ which evaluates at the $\supst{(s+1)}$ marked point. 

\subsubsection{Invariants}\label{Section : invariants}
The moduli space $\mathsf{M}_{g,\vartheta}(Y_m,\beta)$ carries a virtual fundamental class $\left[\mathsf{M}_{g,\vartheta}(Y_m,\beta)\right]^\mathsf{vir}$ allowing us to define \textit{logarithmic Gromov--Witten invariants}. We will consider the following descendant logarithmic Gromov--Witten invariants with a $\lambda_g$ insertion:
$$\mathsf{N}_{g,\vartheta}^{\beta} = \int_{\left[\mathsf{M}_{g,\vartheta}(Y_m,\beta)\right]^\mathsf{vir}}(-1)^g\lambda_g \ev^* (\mathrm{pt}) \psi^{s-2}.$$

The classes $\lambda_g$ and $\psi$ could instead be defined directly on the moduli space $\mathsf{M}_{g,\vartheta}(Y_m,\beta)$; we call these classes $\overline{\lambda}_g$ and $\overline{\psi}$.

\begin{proposition} We have that 
    $$\overline{\lambda}_g = \lambda_g \quad \text{and}\quad  \mathsf{N}_{g,\vartheta}^{\beta} = \int_{\left[\mathsf{M}_{g,\vartheta}(Y_m,\beta)\right]^\mathsf{vir}}(-1)^g\overline{\lambda}_g \ev^* (\mathrm{pt}) \overline{\psi}^{s-2}.$$
\end{proposition}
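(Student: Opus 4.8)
The plan is to prove both assertions by pulling back from $\overline{\mathcal{M}}_{g,s+1}$ along the forgetful morphism $\pi'$, with respect to which $\lambda_g$ and $\psi$ are by definition pullbacks. The first assertion, $\overline\lambda_g=\lambda_g$, will be an identity of cohomology classes coming from the stabilisation--invariance of the Hodge bundle; for the descendant class one only obtains $\overline\psi=\pi'^{*}\psi+E$ with $E$ a boundary correction, and the content is that $E$ drops out of the particular integral defining $\mathsf{N}^{\beta}_{g,\vartheta}$ once the insertion $(-1)^g\lambda_g\,\ev^{*}(\mathrm{pt})$ is present.

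For the Hodge bundle, write $\pi\colon\mathcal{C}\to\mathsf{M}_{g,\vartheta}(Y_m,\beta)$ for the universal source curve and $\mathrm{st}\colon\mathcal{C}\to\pi'^{*}\mathcal{C}^{\mathrm{st}}$ for the relative stabilisation, a composite of contractions of rational tails and bridges. First I would record that for each such contraction $\mathrm{st}_{*}\omega_{\mathcal{C}}=\omega_{\mathcal{C}^{\mathrm{st}}}$ and $R^{1}\mathrm{st}_{*}\omega_{\mathcal{C}}=0$, since fibrewise the relative dualising sheaf restricts to $\mathcal{O}(-1)$ on a contracted $\mathbb{P}^1$, which has no cohomology in either degree; then base change gives $\overline{\mathbb{E}}_g=\pi_{*}\omega_\pi\cong\pi'^{*}\mathbb{E}_g$, so $\overline\lambda_g=\pi'^{*}\lambda_g=\lambda_g$.

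For the descendant class, comparing $\omega_\pi$ with $\mathrm{st}^{*}$ of the relative dualising sheaf of $\mathcal{C}^{\mathrm{st}}$ and restricting to the last section yields $\overline\psi=\pi'^{*}\psi+E$, with $E$ supported on the locus where the marked point $p_{s+1}$ lies on a component contracted by $\mathrm{st}$. On that locus, the contracted components through $p_{s+1}$, together with $p_{s+1}$ and the unique node $q$ attaching them to the rest of the curve, assemble into a genus $0$ subcurve $C_0$ carrying a non-zero class $\beta_0$ and meeting $\partial Y_m$ only at $q$. I would expand $\overline\psi^{\,s-2}=(\pi'^{*}\psi+E)^{s-2}$: every term except $(\pi'^{*}\psi)^{s-2}$ is supported on the gluing strata $\mathsf{M}'_{g}(Y_m,\beta-\beta_0)\times_{\mathcal{Z}}\mathsf{M}_{0}(Y_m,\beta_0)$, where $\mathsf{M}_{0}(Y_m,\beta_0)$ parametrises $C_0$ as a genus $0$ logarithmic map to $(Y_m,\partial Y_m)$ of class $\beta_0$ with the interior marking $p_{s+1}$ and one further, possibly tangent, marking at $q$. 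By the logarithmic splitting formula, the virtual class of the stratum is the fibre product of the two virtual classes; the insertion $\ev^{*}(\mathrm{pt})$ restricts to $\ev_{p_{s+1}}^{*}(\mathrm{pt})$ on the $\mathsf{M}_{0}(Y_m,\beta_0)$ factor and $\lambda_g$ restricts from the other. Since $(Y_m,\partial Y_m)$ is log Calabi--Yau, $c_1(T_{Y_m})-[\partial Y_m]=0$, so $\mathsf{M}_{0}(Y_m,\beta_0)$ has virtual dimension $(\dim Y_m-3)(1-g)+2=(2-3)+2=1$. Hence $\ev_{p_{s+1}}^{*}(\mathrm{pt})\cap[\mathsf{M}_{0}(Y_m,\beta_0)]^{\mathsf{vir}}$ is a class in negative Chow degree, so it vanishes, and every correction term integrates to zero. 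Together with $\overline\lambda_g=\lambda_g$ this yields $\mathsf{N}^{\beta}_{g,\vartheta}=\int(-1)^g\lambda_g\,\ev^{*}(\mathrm{pt})\,\psi^{s-2}=\int(-1)^g\overline\lambda_g\,\ev^{*}(\mathrm{pt})\,\overline\psi^{\,s-2}$.

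I expect the main obstacle to be the last step: identifying the gluing strata and the contact orders at $q$ correctly in the logarithmic category, confirming the logarithmic splitting formula with the compatibility of virtual classes used, and verifying the virtual dimension count for $\mathsf{M}_{0}(Y_m,\beta_0)$ in all degeneration types --- in particular when $q$ lands in a corner of $\partial Y_m$, or in the interior so that $\beta_0\cdot\partial Y_m=0$ and $\mathsf{M}_{0}(Y_m,\beta_0)$ is an ordinary Gromov--Witten space. These are the subtleties in the degeneration formula flagged in the introduction, and should be handled exactly as the analogous comparison in \cite{kennedyhunt2023tropical}.
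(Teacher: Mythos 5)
Your proof goes by a genuinely different route than the paper's. The paper first uses Fulton's \cite[Example 6.3.4(a)]{Fulton} to replace the $\ev^{*}(\mathrm{pt})$ insertion by integration over $\mathsf{M}_{g,\vartheta}(Y_m,\beta,P)$ for a \emph{fixed, generic} interior point $P$, and then shows that on this constrained moduli space the destabilisation locus (where $\overline\psi-\pi'^{*}\psi$ is supported) is actually \emph{empty}: a would-be contracted rational component through $p_{s+1}$ with one node and no relative markings cannot pass through a generic $P$ and avoid the boundary, by a dimension count in $\overline{Y}_m$ that also uses genericity of the $x_i$. You, by contrast, leave the point insertion in the integrand, expand $\overline\psi^{\,s-2}=(\pi'^{*}\psi+E)^{s-2}$, and argue that every $E$-correction term integrates to zero via a log splitting of the boundary virtual class into $[\mathsf{M}'_g]^{\vir}\times_{\mathcal{Z}}[\mathsf{M}_0(Y_m,\beta_0)]^{\vir}$, killed by the excess codimension of $\ev_{p_{s+1}}^{*}(\mathrm{pt})$ on the $\vdim=1$ genus-zero factor. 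Both strategies are standard in spirit (yours is the familiar $\psi$-comparison argument; theirs is a ``no such configurations exist'' genericity argument). What the paper's route buys is precisely that it avoids invoking any log splitting formula at this point, which is the delicate step you flag yourself: in log Gromov--Witten theory the boundary of $\mathsf{M}_{g,\vartheta}(Y_m,\beta)$ is stratified by tropical types, the relevant fibre products are fine-and-saturated, and the identification $[\text{stratum}]^{\vir}=\kappa^{!}\bigl([\mathsf{M}'_g]^{\vir}\times[\mathsf{M}_0]^{\vir}\bigr)$ requires a combinatorial flatness hypothesis (compare Proposition~\ref{Proposition : gluing} later in the paper, which needs \cite{ranganthan2022logarithmic}). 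As written, your argument implicitly assumes the simplest one-node splitting with a single contact order at $q$, which does not cover all boundary strata on which $E$ is supported. You would need either a genuine log splitting statement adapted to these strata, or to reduce to the paper's emptiness argument after all.

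Two smaller points. First, in the Hodge-bundle step, $\omega_{\mathcal{C}}$ restricts to $\mathcal{O}(-1)$ only on contracted rational \emph{tails}; on a contracted rational \emph{bridge} it restricts to $\mathcal{O}$, which has nontrivial $H^0$. The pushforward identity $\mathrm{st}_{*}\omega_{\mathcal{C}}\cong\omega_{\mathcal{C}^{\mathrm{st}}}$ and vanishing of $R^{1}\mathrm{st}_{*}\omega_{\mathcal{C}}$ still hold, but not for the reason stated; the paper sidesteps this by citing \cite[Remark~2.1]{kennedyhunt2023tropical}. Second, your vdim count for $\mathsf{M}_0(Y_m,\beta_0)$ is correct in all the subcases you list (boundary node, interior node), so the numerics of the argument are fine; the gap is solely in justifying the virtual-class splitting across the gluing node in the log category.
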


\begin{proof}
    The first equality follows from~\cite[Remark 2.1]{kennedyhunt2023tropical}. For the second equality, we can use~\cite[Example 6.3.4(a)]{Fulton} to rewrite $\mathsf{N}_{g,\vartheta}^{\beta}$ as an integral over $\mathsf{M}_{g,\vartheta}(Y_m,\beta,P)$, where we choose a generic point $P$ in the interior that the marking evaluates to. Now the difference $\psi - \overline{\psi}$ on $\mathsf{M}_{g,\vartheta}(Y_m,\beta,P)$ is supported on the locus of curves where the component containing this marking is destabilised under the forgetful morphism. We claim this locus is empty. By the argument of~\cite[Proposition 3.4]{MandelRuddat}, it suffices to show that when this component $C'$ is rational, it contains at least three special points. If $C'$ contains two or more additional markings, then it will not be destabilised under the forgetful morphism. 
    If $C'$ contains just one other marking, then it will also have to contain a node, because there are necessarily other relative markings which need to be distributed on some component. 
    So the only possibility is that $C'$ does not hit the boundary in any point. By composition with the map $\nu\colon Y_m \rightarrow \overline{Y}_m$, the map from $C'$ to $Y_m$ defines a curve in a toric surface with fixed incidence to the toric boundary. Counting dimensions of the space of maps from rational curves to toric surfaces with fixed incidence to the toric boundary, we deduce this does not happen for a generic choice of $x_i$ and $P$.
\end{proof}

The class $\beta_p$ of a curve is specified by two data: 
\begin{enumerate}
    \item the tuple $p = (p_1,\dots,p_n)$ recording, for each exceptional curve $E_i$, the intersection product $\beta \cdot E_i = p_i$, 
    \item the tangency profile $\vartheta$.
\end{enumerate}
In the toric situation, we consider more general invariants; consider a balanced multiset $\Delta^{\circ}$ of vectors in $\ZZ^2\setminus \{(0,0)\}$ and non-negative integers $g$ and $n$. As in~\cite[Section 2.1]{kennedyhunt2023tropical}, we have associated to this multiset the moduli space $\mathsf{M}_{g,\Delta} = \mathsf{M}_{g,\Delta}(X_{\Delta},\beta_{\Delta})$ of genus $g$, ($|\Delta^{\circ}| + n$)-pointed stable logarithmic maps to the toric variety $X_{\Delta}$ with tangency given at the last $|\Delta^{\circ}|$ markings by $\Delta^{\circ}$. Fix  a subset $\Delta_F$ of $\Delta^{\circ}$, and let
$$ \ev_{\Delta_F} \colon \mathsf{M}_{g,\Delta} \longrightarrow (\partial X_{\Delta})^{|\Delta_F|}$$
denote the evaluation morphism at the toric boundary divisor $\partial X_{\Delta}$ indexed by the elements of $\Delta_F$. Let
$$\ev_i \colon \mathsf{M}_{g,\Delta} \longrightarrow X_{\Delta}$$
denote the evaluation morphism at the non-tangency markings $i=1,\dots,n$. Fix non-negative integers $k_1,\dots,k_n$ such that
$$n-1 + |\Delta^{\circ}| = 2n + \sum_{i=1}^n k_i + |\Delta_F|,$$
and let
$$\mathsf{N}_{g,\Delta,\Delta_F}^{\bf{k}} = \int_{[\mathsf{M}_{g,\Delta}]^{\vir}} (-1)^{g} \lambda_{g} \prod_{i=1}^{n} \ev_{i}^{*}(\mathsf{pt})\psi^{k_i} \ev_{\Delta_F}^* \left(r^{|\Delta_{F}|}\right),$$
where $r$ is the class of a point on $\partial X_{\Delta}$ and $r^{|\Delta_{F}|}$ denotes the class $\prod_i\pi_i^\star (r)$ in $A^\star \left((\partial X_{\Delta})^{|\Delta_F|}\right)$, where $\pi_i$ denotes the $\supth{i}$ projection $\pi_i \colon (\partial X_{\Delta})^{|\Delta_F|} \rightarrow \partial X_{\Delta}$.

\subsection{Tropical curve counting problems}

\subsubsection{Tropical curves} We briefly recall notation for tropical curves from \cite{kennedyhunt2023tropical}; see also \cite[Section~2.3]{bousseau2018tropical}. We refer the reader to \cite{abramovich2020decomposition,MandelRuddat,Milk,NishSieb06} for background.

    Following \cite{abramovich2016skeletons}, a \textit{graph} $\Gamma = (V(\Gamma),E_f(\Gamma),E_{\infty}(\Gamma))$ is a triple consisting of finite sets of vertices $V(\Gamma)$, bounded edges $E_f(\Gamma)$, and a multiset $E_{\infty}(\Gamma)$ of unbounded edges. We assume all graphs are connected. 
An \textit{abstract tropical curve} $|\Gamma|$ is the underlying topological space of a graph $\Gamma$.

{\samepage
  \begin{definition}
    A parametrised tropical curve ${h}\colon{\Gamma} \rightarrow \mathbb{R}^2$ consists of the following data:
    \begin{enumerate}
    \item a graph $\Gamma$ and a non-negative integer $g_V$ assigned to each vertex $V$ of $\Gamma$, called the \textit{genus};
    \item a bijective function
          $$L\colon E_\infty(\Gamma)\longrightarrow \{1,\ldots,r+n\};$$
        \item a \textit{vector weight} $v_{V,E} \in \mathbb{Z}^2$ for every edge-vertex pair $(V,E)$ with $E\in E_f(\Gamma)\cup E_\infty(\Gamma)$ and $V\in E$ such that for every vertex $V$, the
following balancing condition is satisfied:
$$\sum_{E: V \in E} v_{V,E}=0;$$
\item for each bounded edge $E \in E_f(\Gamma)$, a positive real number $\ell(E)$, called the \textit{length} of $E$;
\item a map of topological spaces $h\colon |\Gamma| \rightarrow \mathbb{R}^2$ such that restricting $h$ to the edge $\{v_1,v_2\}$ is affine linear to the line segment connecting $h(V_1)$ and $h(V_2)$ and moreover 
$$h(V_2) - h(V_1) = \ell(E)v_{V_1,E}.$$
Also, restricting $h$ maps an unbounded edge $E$ associated to a vertex $V$ to the ray $h(V) + \mathbb{R}_{\geq 0}v_{V,E}$.
    \end{enumerate} 
We say $h$ has \textit{degree} $\Delta$ if $v_{V,E}$ coincides with the $\supth{L(E)}$ column of $\Delta$ whenever $E \in E_\infty (\Gamma)$. The \textit{genus} of a parametrised tropical curve is obtained by adding the sum of the $g_V$ to the Betti number of $|\Gamma|$. The \textit{weight} of an edge $E$, denoted by $w(E)$, is the lattice length of $v_{V,E}$.

Fixing a subset $\Delta^F$ of $\Delta$ and a generic configuration $\{x_v\}_{v\in \Delta_F}$ of points in $\mathbb{R}^2$, we say $h$ has \textit{degree} $(\Delta,\Delta^F)$ if it has degree $\Delta$ and the unbounded edges in correspondence
with $\Delta^F$ asymptotically coincide with the half-lines $x_v + \RR_{\geq 0}v$ for $v \in \Delta^F$.
  \end{definition}
  }

For a vertex $V$ of $\Gamma$, write $E_\infty^+(V)$ for the set of unbounded edges $E$ adjacent to $V$ such that $v_E\ne 0$ and $E_f(V)$ for the set of bounded edges adjacent to $V$. The \textit{valency} $\mathsf{val}_V$ of a vertex $V$ is the cardinality of $E_f(V)\cup E_\infty^+(V)$. Write $\Delta_V^\circ$ for the multiset of all $v_{V,E}$ for a fixed $V$.

\subsection{Multiplicities}\label{sec: multiplicities}
We will now count parametrised genus zero tropical curves of degree $(\Delta,\Delta^F)$ satisfying certain incidence conditions. For the remainder of the section, fix  a parameterised tropical curve ${h}\colon\Gamma \rightarrow \mathbb{R}^2$. Tropical curves are counted with a multiplicity, closely related to the multiplicity of \cite{blechman2019refined}. This multiplicity is given as a product\looseness=-1
$$m_{h}\left(q^{\frac{1}{2}}\right) = \prod_{V \in V(\Gamma)}m_V\left(q^{\frac{1}{2}}\right)$$
over multiplicities $m_V$ assigned to each vertex $V$ of our tropical curve. 
We will be counting tropical curves passing through a tuple of points $p=(p_1,\ldots,p_n)$ in $\mathbb{R}^2$, and thus vertices of $\Gamma$ come in two flavours. A vertex is \textit{pointed} if its image under $h$ coincides with one of the $p_i$. Vertices which are not pointed are \textit{unpointed}.

We define $v_1\wedge v_2$ to be the determinant of the matrix with first column $v_1$ and second column $v_2$. The cyclic group with $m$ elements acts on the set of ordered tuples of $m$ distinct elements from the set $\{1,\ldots,m\}$. 
The action is induced by sending the integer in position $i$ to position $i+1$ mod $m$. The set of orbits of this action is the set $\Omega_m$ of \textit{cyclic permutations}. For a cyclic permutation $\omega$,  choose an ordered tuple $\tilde{\omega}$ in the orbit of $\omega$. Fix a balanced tuple of vectors $(a_1,\ldots,a_m)$. Define
$$k(\omega) = \sum_{2\leq i < j \leq m}a_{\tilde{\omega}(i)} \wedge a_{\tilde{\omega}(j)},$$
where $\tilde{\omega}(i)$ sends $i$ to the element in the $\supth{i}$ position of the chosen representative $\tilde{\omega}$. As the vectors $a_i$ have sum zero, $k(\omega)$ is well defined. 

\begin{definition}\label{Definition : blechman-shustin} Let $V$ be a pointed vertex of valency $m$ given by vectors $a_1,\dots,a_m \in \ZZ^2$. The multiplicity of $v$ is $$m_V = \frac{1}{(m-1)!}\sum_{\omega \in \Omega_N} q^{\frac{k(\omega)}{2}}.$$

If, however, $V$ is an unpointed trivalent vertex with the balanced set of vectors $(a_1,a_2,a_3)$, then the multiplicity assigned to $V$ is the Block--G\"ottsche multiplicity, see \cite{block2016refined}:  
\begin{equation}\label{eq : BG multiplicity}
m_V = \frac{q^{\frac{1}{2}|a_1 \wedge a_2|} - q^{-\frac{1}{2}|a_1 \wedge a_2|}}{q^{\frac{1}{2}} - q^{-\frac{1}{2}}}.
\end{equation}
The multiplicity of a bivalent vertex is $1$.
\end{definition}

\subsection{Tropical counting problem} We fix once and for all a subset $\Delta^F$ of $\Delta$ and a generic configuration $\{x_v\}_{v\in \Delta_F}$ of points in $\mathbb{R}^2$. Thus it will make sense to discuss tropical curves of degree $(\Delta,\Delta^F)$. For a generic tuple $p=(p_1,\dots,p_n)$  of $n$ points in $\mathbb{R}^2$, let $T_{\Delta,\Delta^F,p}^\bk$ be the set of rigid parametrised tropical curves  of degree $(\Delta,\Delta^F)$ passing through $p$ with degree $\bk$. This means $h(E_i) = p_i$ for $i = 1, \dots , n$, where $E_1,\dots,E_n$ are the last $n$ unbounded edges. Moreover, $E_i$ is attached to a vertex of valency at least $k_i + 2$.

\begin{proposition}\label{prop:TropCurveCount}
There is an open dense subset $U_{n}^\bk(\Delta_F)$ of\, $\mathbb{R}^{2n}\times \mathbb{R}^{2|\Delta_F|}$ such that if $p, (x_v)_{v\in \Delta^F}\in U_{n}^\bk(\Delta_F)$, then $T_{\Delta,\Delta^F,p}^\bk$ is a finite set and the valency of the vertex supporting the unbounded edge $E_i$ is $k_i + 2$. Moreover, we may choose $U_{n}^{\bk}(\Delta_F)$ such that all parametrised tropical curves passing through $p$ with degree $\bk$ are rigid.
\end{proposition}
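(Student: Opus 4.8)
The plan is to run the standard dimension-counting argument behind genericity results in tropical enumerative geometry, in the descendant form developed in \cite{kennedyhunt2023tropical}; the only additional ingredient here is the fixed configuration $\{x_v\}_{v\in\Delta_F}$, each point of which imposes one further affine constraint on the curves. Throughout I restrict to genus zero, as in the rest of this section.

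I would begin by recalling the polyhedral structure on the space of parametrised genus zero tropical curves of degree $(\Delta,\Delta^F)$ with $n$ contracted point-markings. Such a curve has an underlying \emph{combinatorial type} $\Theta$ — the graph $\Gamma$ together with the labelling of its ends and the integer vectors on its edges, the latter being forced by the balancing condition once $\Gamma$ and the ends are fixed — and for fixed $\Theta$ the curves of that type form the relative interior $M_\Theta$ of a rational polyhedron, coordinatised by the bounded edge lengths together with the image of a chosen root vertex, so $\dim M_\Theta=\#E_f(\Gamma)+2$. The evaluation map
\[
\ev_\Theta\colon M_\Theta\longrightarrow \mathsf{Ev}^{\mathsf{trop}}\times\RR^{2|\Delta_F|},
\]
recording the images of the $n$ point-markings and the positions of the $\Delta_F$-ends, is the restriction of an affine map, hence has polyhedral image. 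Since $\Delta$ is fixed there are only finitely many combinatorial types, so it is enough to analyse each $\ev_\Theta$ and then delete a finite union of subsets from the target.

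Next I would single out the \emph{good} types: genus zero, with no superfluous bivalent vertices (legitimately ignored, as abstract tropical curves are topological spaces), every unpointed vertex trivalent, and the vertex carrying $E_i$ of valency exactly $k_i+2$ for each $i$. For such a type the handshake identity together with $\#E_f(\Gamma)=\#V(\Gamma)-1$ forces $\#V(\Gamma)=|\Delta^\circ|+n-2-\sum_i k_i$, hence $\dim M_\Theta=\#V(\Gamma)+1=|\Delta^\circ|+n-1-\sum_i k_i$, which by the hypothesis $n-1+|\Delta^\circ|=2n+\sum_i k_i+|\Delta_F|$ equals $2n+|\Delta_F|$; this is precisely the dimension of the locus in the target cut out by the $|\Delta_F|$ conditions imposed by the $\Delta_F$-ends. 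A tropical curve of a good type is reconstructed rigidly from the positions of its marked ends and its $\Delta_F$-ends by propagating edge lengths outward along the tree, so $\ev_\Theta$ is injective on $M_\Theta$ and a homeomorphism onto a full-dimensional open polyhedron $P_\Theta$ inside that locus. Putting $U_n^{\mathbf k}(\Delta_F)$ inside the complement of the finitely many boundaries $\partial P_\Theta$ for good $\Theta$ then already yields that $T_{\Delta,\Delta^F,p}^{\mathbf k}$ is finite, containing at most one curve of each good type, and that the vertex carrying $E_i$ has valency $k_i+2$ for every curve it contains.

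It remains to control the non-good types, and this is the step I expect to be the real obstacle: one must show that for every non-good type $\Theta$ the set of configurations $(p,(x_v))$ admitting a \emph{rigid} curve of type $\Theta$ is nowhere dense, after which $U_n^{\mathbf k}(\Delta_F)$ is obtained by deleting the finite union of these sets together with the boundaries $\partial P_\Theta$, and over the resulting open dense set every tropical curve through $p$ lies in the relative interior of a good cell and is therefore rigid. After discarding superfluous bivalent vertices — which changes neither rigidity nor $\dim\ev_\Theta(M_\Theta)$ — a non-good type either has an over-valent vertex (some unpointed vertex of valency $\ge 4$, or some pointed vertex of valency $>k_i+2$) or has a $\psi^{k_i}$-marking at a vertex of valency $<k_i+2$. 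In the first case, resolving the over-valent vertex into a trivalent tree strictly raises the dimension and terminates at a type of dimension $\le 2n+|\Delta_F|$, so $\dim\ev_\Theta(M_\Theta)\le\dim M_\Theta<2n+|\Delta_F|$ and the whole image is nowhere dense. The second case is the delicate one: there $\dim M_\Theta$ may \emph{exceed} $2n+|\Delta_F|$, so a source dimension count does not suffice, and one must instead show that $\ev_\Theta$ is not dominant by exhibiting, at a generic point of $M_\Theta$, a positive-dimensional space of first-order deformations tangent to the fibres of $\ev_\Theta$ — sliding the under-valent marked vertex along its straight edge while rebalancing elsewhere in the tree leaves every recorded position unchanged — following the resolution/perturbation arguments of \cite{kennedyhunt2023tropical}. (If, as may be the convention here, a tropical curve ``with data $\mathbf k$'' is by definition one whose $i$-th marked vertex has valency $k_i+2$, then this delicate case does not arise and only the over-valent degenerations must be excluded.)
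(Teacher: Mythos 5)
The paper's own proof of this proposition is a single sentence: it cites \cite[Proposition~1.4.1]{kennedyhunt2023tropical} and asserts the argument carries over with a cosmetic modification (the modification being exactly the inclusion of the fixed $\Delta_F$-ends). So the paper does not actually write out the dimension count, and your proposal is a blind reconstruction of the argument in that reference. The overall architecture you use --- stratify by combinatorial type, show the ``expected-valency'' types have source dimension matching the number of incidence conditions and give rise to finitely many rigid curves per type, and argue all other types sweep out nowhere-dense loci --- is indeed the standard approach and is what the cited proposition establishes. Your dimension bookkeeping (in particular $\dim M_\Theta = |\Delta^\circ| + n - 1 - \sum_i k_i = 2n + |\Delta_F|$ under the stated numerical hypothesis) is correct.

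Two points where your write-up has a real gap rather than a stylistic difference. First, the map $\ev_\Theta\colon M_\Theta \to \mathsf{Ev}^{\mathsf{trop}}\times\RR^{2|\Delta_F|}$ as you describe it is not well-defined: a parametrised curve determines a \emph{half-line} for each $\Delta_F$-end, not a point of $\RR^2$, so it does not ``record the positions of the $\Delta_F$-ends'' in $\RR^{2|\Delta_F|}$. The correct formulation is to work with the incidence correspondence $I_\Theta \subset M_\Theta \times \RR^{2|\Delta_F|}$ consisting of pairs $(h,(x_v))$ with $x_v$ on the $E_v$-line of $h$; then $\dim I_\Theta = \dim M_\Theta + |\Delta_F|$, and one analyses the projection $I_\Theta \to \mathsf{Ev}^{\mathsf{trop}}\times\RR^{2|\Delta_F|}$. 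With this fix, your ``good types'' give $\dim I_\Theta = 2n + 2|\Delta_F|$ as wanted, and the over-valent types give $\dim I_\Theta < 2n + 2|\Delta_F|$ and hence nowhere-dense image, exactly as you argue. Second, the ``under-valent marked vertex'' case that you flag as the real obstacle: you are right to be suspicious, and your sketch of exhibiting positive-dimensional fibres is not yet a proof. In the cited reference the convention is that the descendant marking $E_i$ is \emph{required} to sit at a vertex of valency $k_i+2$ --- that is the tropical meaning of the $\psi^{k_i}$ insertion, and it is built into the definition of the curve count --- so the case you worry about is ruled out by fiat, as you anticipate in your parenthetical. With those two adjustments your reconstruction matches the argument the paper appeals to.
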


\begin{proof}
    The argument of \cite[Proposition 1.4.1]{kennedyhunt2023tropical} works with a cosmetic modification.
\end{proof}

From now on we assume $p_i \neq p_j$ whenever $i$ and $j$ are distinct without further comment.

\begin{remark}
Since there are only finitely many combinatorial types of rigid parametrised tropical curves of degree $\Delta$, it is automatic that the set $T_{\Delta,p}^\bk$ is finite.
\end{remark} 

Recall the notation $m_V$ for the multiplicity of the vertex $V$ defined in Section~\ref{sec: multiplicities}.
 
\begin{definition}\label{defn:TropCurveCount}
  Fix $ (p,x) = ((p_1,\ldots,p_n),(x_v)_{v\in \Delta^F})$ in $U_n^\bk(\Delta_F)$, and define $$ \mathsf{N}^{\Delta,\bk}_{\mathsf{trop},\Delta_F}(q) =\sum_{h \in T_{\Delta,\Delta',p}^\bk} \prod_{V \in V(\Gamma)}m_V(q).$$
Also define  $\Ntrop(1)=\Ntrop$.
\end{definition}

\textit{A priori} the count $\Ntrop(q)$ depends on the choice of the point in $U_{n}^\bk(\Delta_F)$. We suppress this dependence from our notation as it is independent \textit{a posteriori}.

\subsection{The tropical counting and logarithmic Gromov--Witten theory}
Fix a subset $\Delta^F \subset \Delta$. 

\begin{theorem}\label{thm : bdyrefined} After the change of variables $q=e^{iu}$, there is an equality of generating series
$$\sum_{g \geq 0} \mathsf{N}_{g,\Delta,\Delta_F}^{\bf{k}}u^{2g -2 + |\Delta^{\circ}| - \sum k_i} =  \left(\prod_{v\in \Delta^F}\frac{1}{|v|}\right)\mathsf{N}^{\Delta,\bf{k}}_{\mathsf{trop},\Delta_F}(q)\left((-i)\left(q^{\frac{1}{2}} - q^{-\frac{1}{2}}\right)\right)^{|\Delta^{\circ}| - \sum_i k_i - 2}.$$ 
\end{theorem}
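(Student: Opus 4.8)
The plan is to reduce Theorem \ref{thm : bdyrefined} to the boundary-free refined descendant tropical correspondence theorem of \cite{kennedyhunt2023tropical}, i.e.\ to the case $\Delta_F = \emptyset$, by peeling off the boundary point conditions $\ev_{\Delta_F}^*(r^{|\Delta_F|})$ one marking at a time. Concretely, I would first observe that imposing $\ev_v^*(r)$ for $v \in \Delta_F$ is, on the tropical side, exactly the condition that the unbounded edge $E_v$ asymptotically lies on the fixed half-line $x_v + \RR_{\geq 0}v$; thus a curve of degree $(\Delta, \Delta^F)$ is the same as a curve of degree $\Delta$ subject to $|\Delta_F|$ extra affine constraints. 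The combinatorial heart is therefore a bookkeeping statement: rigid tropical curves of degree $(\Delta,\Delta^F)$ through $p$ are in weight-preserving bijection with the appropriate rigid curves counted by the boundary-free problem with $|\Delta_F|$ of the ends pinned, and Proposition \ref{prop:TropCurveCount} guarantees (for generic $(p,x)$) finiteness, rigidity, and the valency $k_i+2$ at the vertex supporting $E_i$.

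Next I would handle the Gromov--Witten side. The key input is that pulling back the point class $r$ on $\partial X_\Delta$ along $\ev_v$ has the same effect as working on the moduli space where the $v$-th marking is constrained to map to a fixed point of the toric boundary divisor $D_v$; by the projection/excess-intersection formalism (as in the proof of the Proposition following Setting \ref{setting:basic}, using \cite[Example 6.3.4]{Fulton}) one rewrites $\mathsf{N}_{g,\Delta,\Delta_F}^{\bf k}$ as an integral over the moduli space $\mathsf{M}_{g,\Delta}(X_\Delta,\beta_\Delta, P_{\Delta_F})$ with the $\Delta_F$-markings sent to generic points $P_v \in D_v$. One then compares this with the unconstrained space: the difference of $\psi$-classes is supported on loci where the rational component carrying a pinned boundary marking gets destabilised, and a dimension count (generic $x_v$) shows this locus is empty, exactly as in the earlier Proposition. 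The factor $\prod_{v\in\Delta^F}\frac{1}{|v|}$ arises because a degree-$|v|$ tangency marking meeting a fixed point of $D_v$ contributes with multiplicity $1/|v|$ — this is the standard relationship between relative point insertions on the boundary and their tropical counterparts, and it can be extracted from the degeneration/localisation comparison underlying \cite[Theorem A]{kennedyhunt2023tropical}.

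With these two translations in hand, the proof assembles as follows. Apply \cite[Theorem A]{kennedyhunt2023tropical} (the refined descendant tropical correspondence for toric targets, boundary-free) to the space $\mathsf{M}_{g,\Delta}(X_\Delta,\beta_\Delta,P_{\Delta_F})$, whose tangency profile now records $|\Delta_F|$ ends pinned to fixed points; this yields a generating series identity in which the algebro-geometric side is $\sum_g \mathsf{N}_{g,\Delta,\Delta_F}^{\bf k} u^{2g-2+|\Delta^\circ| - \sum k_i}$ up to the $\prod 1/|v|$ correction, and the tropical side is $\mathsf{N}^{\Delta,\bf k}_{\mathsf{trop},\Delta_F}(q)$ times the appropriate power of $(-i)(q^{1/2}-q^{-1/2})$. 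The exponent $|\Delta^\circ| - \sum_i k_i - 2$ is read off by the usual dimension/degree count: each unpinned, unpointed trivalent vertex contributes one factor of $(q^{1/2}-q^{-1/2})$ in the Block--Göttsche normalisation, and after the change of variables $q = e^{iu}$ each such factor is $\sim iu$, matching the $u$-grading; imposing a boundary point condition at an end does not change this count since it removes one degree of freedom on both sides symmetrically. Finally, re-substitute $q = e^{iu}$ to obtain the stated equality.

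The main obstacle I anticipate is the precise justification of the $1/|v|$ multiplicity and the claim that constraining a boundary marking to a fixed generic point destabilises nothing: one must check that no rational bridge component carrying such a marking can appear, which requires the genericity of the blow-up points $x_j$ together with a careful virtual-dimension estimate on the boundary strata of $\mathsf{M}_{g,\Delta}$ — this is the ``extra subtleties in the degeneration formula in this higher genus, descendant setting'' alluded to in the introduction, and it is exactly where the analysis of \cite{kennedyhunt2023tropical} must be re-run with the cosmetic modification of pinning some ends to boundary points. The tropical bijection itself should be routine given Proposition \ref{prop:TropCurveCount}, and the combinatorics of the $(q^{1/2}-q^{-1/2})$-powers is a matter of careful Euler-characteristic bookkeeping.
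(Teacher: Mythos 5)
Your overall strategy---adapt the proof of \cite[Theorem A]{kennedyhunt2023tropical} to the situation where some unbounded ends are pinned to generic boundary points---is exactly what the paper does. However, two points in your write-up need correcting.

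First, you frame the argument as a \emph{reduction to} Theorem A of \cite{kennedyhunt2023tropical}: you propose to rewrite $\mathsf{N}_{g,\Delta,\Delta_F}^{\mathbf{k}}$ as an integral over a moduli space $\mathsf{M}_{g,\Delta}(X_\Delta,\beta_\Delta,P_{\Delta_F})$ with the $\Delta_F$-markings sent to fixed boundary points, and then ``apply Theorem A'' to that space. Theorem A of \cite{kennedyhunt2023tropical} does not apply to that space: pinning boundary ends changes the virtual dimension count and the tropical enumerative problem, so the identity you want is a new statement, not an instance of the old one. What one actually does (and what the paper's one-line proof says) is \emph{re-run the argument} of Theorem A---degeneration formula, decomposition into tropical types, gluing, and vertex-contribution calculation---observing that the degeneration still produces the analogue of \cite[Proposition 3.4.2]{kennedyhunt2023tropical} and that the vertex contributions of \cite[Section 6.2]{kennedyhunt2023tropical} are unchanged. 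You do acknowledge this at the very end (``the analysis of \cite{kennedyhunt2023tropical} must be re-run''), so this is more a matter of misleading framing than a fatal gap.

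Second, your explanation of the $\prod_{v\in\Delta^F}|v|^{-1}$ factor (``a degree-$|v|$ tangency marking meeting a fixed point contributes with multiplicity $1/|v|$'') is a plausible slogan but is not the mechanism the proof actually uses. The correct accounting is bookkeeping internal to the tropical-side normalisation: in \cite[Section 6.1]{kennedyhunt2023tropical} the weights $w(E)$ of \emph{bounded} edges coming from the gluing formula are absorbed into the vertex multiplicities, and this absorption scheme implicitly attributes an edge-weight factor to every leg at a vertex. When an unbounded edge is pinned (so it does not actually come with a gluing factor $w(E)$), absorbing as before over-counts by exactly $|v|$, and one divides through by $|v|$ for each fixed unbounded end to compensate. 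Relatedly, the destabilisation concern you raise for $\psi$-classes at the pinned markings does not arise here: in $\mathsf{N}_{g,\Delta,\Delta_F}^{\mathbf{k}}$ the descendant insertions $\psi^{k_i}$ sit only at the interior point markings $i=1,\dots,n$, not at the tangency/boundary markings, so pinning a boundary end introduces no new $\psi$-comparison. The comparison you are thinking of is the one already handled in \cite{kennedyhunt2023tropical} for the interior descendant marking.
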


\begin{proof}
    The theorem is almost identical to~\cite[Theorem A]{kennedyhunt2023tropical}. Indeed, applying the degeneration formula yields the analogue of~\cite[Proposition 3.4.2]{kennedyhunt2023tropical}. In absorbing the contribution of weights of bounded edges as in~\cite[Section 6.1]{kennedyhunt2023tropical}, one must divide through by the weights of the fixed unbounded edges to compensate, which explains the prefactor $|v|^{-1}$ of the right-hand side of the statement. Since the vertex contributions are unaffected, the theorem then follows from the vertex contribution calculations in~\cite[Section 6.2]{kennedyhunt2023tropical}.
\end{proof}

\section{Scattering diagrams and tropical curves}\label{Section : scattering}
Let $M = \Hom(T,\CC^*) \cong \ZZ^2$ be the character lattice of a two-dimensional torus $T$. We will make use of the ``algebra of functions'' of the quantum torus $$\CC_q[M] = \bigoplus_{m \in M} \CC\left[q^{\pm \frac{1}{2}}\right] \hat{z}^m,$$ the $\CC[q^{\pm \frac{1}{2}}]$ algebra with non-commutative product given by $\hat{z}^{r} \cdot \hat{z}^{r'} = q^{\frac{1}{2}r \wedge r'}\hat{z}^{r+r'}$ for $r,r' \in M$, as well as the change of variables $q = e^{iu}$.

\subsection{Quantum scattering diagrams}\label{Section : Quantum scattering diagrams}
Let $R$ be a complete local $\mathbb{C}$ algebra with maximal ideal $\mathbf{m}_R$. A quantum scattering diagram over a ring $R$ is the data of a collection of half-lines $\{\mathfrak{d}\}$ called \textit{rays} in $M_{\RR} \cong \mathbb{R}^2$ with primitive integral direction $m_{\mathfrak{d}} \in M\setminus \{0\}$. To each ray is associated a function called the \textit{Hamiltonian} $H_{\mathfrak{d}}$ such that
$$H_{\mathfrak{d}} \in \varprojlim_{\ell} \left(R/\mathbf{m}_{R}^{\ell} \otimes \CC (\!(u)\!)\left[\hat{z}^{m_{\mathfrak{d}}}\right]\right).$$
See \cite[Section 1]{bousseau2020quantumtrop} for further details on quantum scattering diagrams.

The Looijenga pair $(Y_m, \partial Y_m)$ has an associated quantum scattering diagram called the \textit{initial quantum scattering diagram}.

\begin{definition}
The \textit{initial quantum scattering diagram} $\hat{\fD}_{\fm}$ associated to the tuple $m$ is the quantum scattering diagram over $R:=\mathbb{C}\llbracket t_1, \ldots, t_n \rrbracket$
consisting of 
incoming rays 
$(\fd_j, \hat{H}_{\fd_j})$ for $1 \leqslant j \leqslant n$, where 
\[ \fd_j = - \mathbb{R}_{\geqslant 0} m_j \]
and 
\begin{equation}\label{eq : initial hamiltonians}
\hat{H}_{\fd_j} =
\sum_{\ell \geqslant 1}\frac{1}{\ell} \frac{(-1)^{\ell-1}}{q^{\frac{\ell}{2}}-q^{-\frac{\ell}{2}}} t_j^\ell \hat{z}^{\ell m_j} 
\end{equation}
with $q = e^{iu}$.
\end{definition}

Associated to any scattering diagram is its \textit{consistent completion}; see~\cite[Theorem 6]{kontsevich2006affine}.

\begin{definition}\label{Definition : scattering completion}
    Let $S(\hat{\fD}_{\fm})$ be the quantum scattering diagram obtained by forming the \textit{consistent completion} of $\hat{\fD}_{\fm}$, as in \cite{kontsevich2006affine}. 
\end{definition}  

Theorem~\ref{thm:main} relates logarithmic Gromov--Witten invariants of $(Y_m,\partial Y_m)$ to quantum broken line counts on $S(\hat{\fD}_{\fm})$; see Section~\ref{sec : broken lines}. These counts also determine the product structure on the quantum mirror algebra to $(Y_m,\partial Y_m)$; see Section~\ref{Subsection : product structure}.

\subsection{Factoring the quantum scattering diagram}
The consistent quantum scattering diagram $S(\hat{\fD}_{\fm})$ is combinatorially complex, but it can be understood in terms of \textit{factored quantum scattering diagrams} whose structure reflects the combinatorics of tropical curves. This connection was established in \cite{filippini2015block}. We follow~\cite[Section 6]{bousseau2020quantumtrop}.

To factor a scattering diagram over the ring $R = \mathbb{C} \llbracket t_1,\ldots,t_n \rrbracket$, we study a scattering diagram over the ring $R_N = \CC[t_1,\dots,t_n]/(t_1^{N+1},\dots,t_n^{N+1})$. There is an embedding
$$R_N \longhookrightarrow \tilde{R}_N = \CC[u_{ij} : 1 \leq i \leq n, 1 \leq j \leq N]\,/\langle u_{ij}^2 :  1 \leq i \leq n, 1 \leq j \leq N \rangle $$
by
$$t_{i} = \sum_{j=1}^N u_{ij}, \,\,\, \Longrightarrow \,\,\, t^{\ell}_{i} = \sum_{A \subset \{1,\dots,N\}, |A| = \ell} \ell! \prod_{a \in A} u_{i a}.$$

Under this embedding we have 
\[
\hat{H}_{\fd_j} =
\sum_{\ell = 1}^N \sum_{A \subset \{1,\dots,N\},|A|=\ell}\left(\frac{1}{\ell} \frac{(-1)^{\ell-1}}{q^{\frac{\ell}{2}}-q^{-\frac{\ell}{2}}}\right) \ell! \left(\prod_{a \in A}u_{ja} \right)\hat{z}^{\ell m_j}.
\]
We then define a new initial quantum scattering diagram $\hat{\fD}_{\fm}^{\mathrm{split}}$ with initial rays $(\fd_{j \ell A}, \hat{H}_{j \ell A})$ for $\ell$ in  $\{1,\dots, N\}$ and $A \subset \{1,\dots, N\}$ with $|A| = \ell$, where $$\fd_{j \ell A} = -\RR_{\geq 0}m_j + c_{j \ell A} $$ for some generic $c_{j \ell A} \in \RR^2$ in general position and
\[
\hat{H}_{\fd_{j \ell A}} =
\left(\frac{1}{\ell} \frac{(-1)^{\ell-1}}{q^{\frac{\ell}{2}}-q^{-\frac{\ell}{2}}}\right) \ell! \left(\prod_{a \in A}u_{ja} \right)\hat{z}^{\ell m_j}.
\]
By~\cite[Lemma 6.1]{bousseau2020quantumtrop}, the Hamiltonian attached to \emph{any} ray in $S(\hat{\fD}_{\fm}^{\mathrm{split}})$ can be determined recursively from the initial rays and Hamiltonians.  

\subsection{From factored scattering diagrams to tropical curves}

\begin{lemma}[\textit{cf.} {\cite[Proof of Proposition 6.2]{bousseau2020quantumtrop}}]\label{Lemma : ray bijection}
For a general choice of\, $\hat{\fD}_{\fm}^{\mathrm{split}}$, there is a bijective cor\-re\-spon\-dence between $(\fd,H_{\fd}) \in S(\hat{\fD}_{\fm}^{\mathrm{split}})$ and rational tropical curves $h \colon \Gamma \rightarrow \RR^2$ such that the following hold:  
\begin{enumerate}[label={\rm(\roman*)}, ref={\rm\roman*}]
\item There is an edge $E_{\mathsf{out}}\in E_{\infty}(\Gamma)$ with
$h(E_{\mathsf{out}})=\fd$.
\item If\, $E\in E_{\infty}(\Gamma)\setminus
\{E_{\mathsf{out}}\}$ or if\, $E_{\mathsf{out}}$ is the only edge of\, $\Gamma$
and $E=E_{\mathsf{out}}$, 
then $h(E)$ is contained in some $\fd_{j \ell A}$ where 
$$1\le j\le n,\quad A\subseteq\{1,\ldots,N\}, \quad \ell\ge 1.$$
Furthermore,
if\, $E\not=E_{\mathsf{out}}$, then  
the unbounded direction of\, $h(E)$ is given by $-m_j$.
\item
 If\, $E,E'\in E_{\infty}(\Gamma)\setminus\{E_{\mathsf{out}}\}$
and $h(E)\subseteq \fd_{j\ell A}$ and $h(E')\subseteq
\fd_{j \ell' A'}$, then $A\cap A'=\emptyset$.
\item If\, $E\in E_{\infty}(\Gamma)\setminus\{E_{\mathsf{out}}\}$
or if $E_{\mathsf{out}}$ is the only edge of\, $\Gamma$ and $E=E_{\mathsf{out}}$,
and $h(E)\subseteq \fd_{j \ell A}$, we have $w_{\Gamma}(E)=\ell$.
\end{enumerate}
Furthermore,  we have that
\begin{equation}\label{eq : hamiltonian expression}
\hat{H}_{\fd} = m_{\Gamma}(q^{\frac{1}{2}})\prod_{j=1}^n \prod_{\ell \geq 1}\left(\frac{(-1)^{\ell}}{\ell} \frac{q^{\frac{1}{2}} - q^{-\frac{1}{2}}}{q^{\frac{\ell}{2}} - q^{-\frac{\ell}{2}}}\right)^{k_{\ell,j}} (\ell !)^{k_{\ell,j}} \left(\prod_{A \in A^{\Gamma}_{j \ell}} \prod_{a \in A} u_{ja}\right)\frac{\hat{z}^{\ell_p m}}{q^{\frac{1}{2}} - q^{-\frac{1}{2}}}, 
\end{equation}
where $m_{\Gamma}(q^{\frac{1}{2}})$ is the refined multiplicity of the tropical curve $h\colon \Gamma \rightarrow \RR^2$, $A_{j \ell}^{\Gamma}$ is the set of subsets of\, $\{1,\dots, N\}$ of size $\ell$ such that $\fd_{j \ell A}$ is an unbounded ray of $h \colon\Gamma \rightarrow \RR^2$, and $k_{\ell j} = |A_{j \ell}^\Gamma|$. Furthermore, $m$ is the primitive direction of the ray $\fd$, and $\ell_p$ is such that $$\sum_{j=1}^n \sum_{\ell \geq 1} \ell k_{\ell,j} m_j = \ell_p m.$$
\end{lemma}

\subsection{Monomials on broken lines}\label{sec : broken lines}

\begin{definition}
    A \emph{quantum broken line} is a pair $(\gamma,\{m_L\})$ consisting of  
    \begin{enumerate}
    \item a proper continuous piecewise integral affine map
      $$\gamma \colon (-\infty,0] \longrightarrow M_{\RR}$$ with only finitely many domains of linearity, 
        \item for each $L \subset (-\infty ,0]$ that is a maximal connected domain of linearity of $\gamma$, a choice of monomial $m_{L} = c_L \hat{z}^{p_L}$ with $c_L \in R_{u} = R \llbracket u \rrbracket$ and $p_L \in M$,
    \end{enumerate} 
    where these data satisfy the following:  
\begin{itemize}
    \item Along any domain of linearity, the direction is given by $-p_L$.
    \item $\gamma(0) = Q \in M_{\RR}$.
    \item For the unique unbounded domain of linearity $L$, we have that $m_L = \hat{z}^{r}$ for some $r \in M \setminus \{0\}$. 
    \item Let $t \in (-\infty,0)$ be a point of $\gamma$ which is not linear, passing from the domain of linearity $L$ to the domain of linearity $L'$. Then $\gamma$ necessarily crosses a ray $\fd$ in the scattering diagram at time $t$. Moreover, $c_{L'} \hat{z}^{p_{L'}}$ is a summand of the image of $ c_L \hat{z}^{p_L}$ under the automorphism
      $$c_L \hat{z}^{p_L} \longmapsto \exp\left(\hat{H}_{\fd}\right)c_L \hat{z}^{p_L}\exp\left(-\hat{H}_{\fd}\right)$$ 
\end{itemize}
\end{definition}

We denote the monomial attached to the segment ending at $Q$ by $c(\gamma)\hat{z}^{\varv(\gamma)}$.

\begin{definition}\label{def : quantum broken line count}
Let $\vartheta = (r_1,\dots,r_s)$ be a tuple of vectors in $M=\ZZ^2$. Define 
\begin{equation}\label{quantum broken line count}
\left\langle \hat{\vartheta}_{r_1},\dots, \hat{\vartheta}_{r_s}\right\rangle = \sum_{(\gamma_1,\dots,\gamma_s)} c(\gamma_1) \dots c(\gamma_s) \prod_{2 \leq i < j \leq s} q^{\frac{1}{2}\varv(\gamma_i) \wedge \varv(\gamma_j)}, 
\end{equation}
where the sum is over tuples $(\gamma_1,\dots,\gamma_s)$  of broken lines such that $\gamma_i(0) = Q$ for some fixed $Q$, $m_L = \hat{z}^{r_i}$, and $\sum_{i=1}^{s} \varv(\gamma_i) = 0$.
\end{definition}

The notation $\hat{\vartheta}_{r_i}$ comes from the connection to the product structure on the quantum mirror. Let $R = \CC\llbracket t_1,\dots, t_n \rrbracket$. 
\subsection{Product structure}\label{Subsection : product structure}

In this section, for simplicity, we assume that $(Y_m,\partial Y_m)$ is positive, see \cite[Definition 6.10]{gross2015mirror}, in order for the algebra structure to make sense without completing (see for example \cite[Proof of Corollary 4.7]{mandel2021theta}). For the general case, see \cite{bousseau2020quantum}. Let
$$R_{(Y_m,\partial Y_m)} = \bigoplus_{r \in M}R_u\hat{\vartheta}_r$$
be the free $R_u$-module. We call the basis elements $\hat{\vartheta}_r$ \emph{quantum theta functions}. 
We explain briefly how \eqref{quantum broken line count} relates to a product structure on $R_{(Y_m,\partial Y_m)}$. 
Fix a point $Q \in \RR^2 \setminus S(\hat{\fD}_{\fm})$. Let
$$\hat{\vartheta}_{r}^{Q} = \sum_{\gamma} c(\gamma) \hat{z}^{\varv(\gamma)} \in R_u \otimes_{\CC[q^{\pm \frac{1}{2}}]} \CC_q[M],$$
where the sum is over all broken lines $\gamma$ that have start direction given by $r$ and end at the point $Q$. We define a product structure on $R_{(Y_m,\partial Y_m)}$ via the structure constants
$$\hat{\vartheta}_{r_1}\cdots\hat{\vartheta}_{r_s} = \sum_{r}c^{r}_{r_1 \cdots r_s} \hat{\vartheta}_r,$$
where $c^{r}_{r_1 \cdots r_s}$ is given by the coefficient of $\hat{\vartheta}_{r}^Q$ in $\hat{\vartheta}^Q_{r_1}\cdots\hat{\vartheta}^Q_{r_s}$ for $Q$ sufficiently close to $r$.

\begin{lemma}\label{Lemma : structureconstant}
  We have that 
    $$c^0_{r_1 \cdots r_s} = \left\langle \hat{\vartheta}_{r_1},\dots, \hat{\vartheta}_{r_s}\right\rangle.$$
\end{lemma}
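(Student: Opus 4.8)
The plan is to unwind both sides of the claimed equality directly from the definitions and match them term by term. The right-hand side $\langle \hat{\vartheta}_{r_1},\dots,\hat{\vartheta}_{r_s}\rangle$ is defined in \eqref{quantum broken line count} as a sum over tuples $(\gamma_1,\dots,\gamma_s)$ of quantum broken lines with $\gamma_i(0)=Q$ (a \emph{fixed}, balanced point $Q$), prescribed start directions $r_i$, and $\sum_i s(\gamma_i)=0$, weighted by $c(\gamma_1)\cdots c(\gamma_s)\prod_{2\le i<j\le s}q^{\frac12 s(\gamma_i)\wedge s(\gamma_j)}$. The left-hand side $c^0_{r_1\dots r_s}$ is, by the definition in Section \ref{Subsection : product structure}, the coefficient of $\hat{\vartheta}_0=1$ in the iterated product $\hat{\vartheta}^Q_{r_1}\cdots\hat{\vartheta}^Q_{r_s}$, where $\hat{\vartheta}^Q_{r_i}=\sum_{\gamma_i} c(\gamma_i)\hat{z}^{s(\gamma_i)}$ ranges over broken lines ending at $Q$ with start direction $r_i$, and the product is taken in the quantum torus algebra $R_u\otimes\CC_q[M]$.

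First I would expand the product $\hat{\vartheta}^Q_{r_1}\cdots\hat{\vartheta}^Q_{r_s}$ as a sum over tuples $(\gamma_1,\dots,\gamma_s)$ of the products $c(\gamma_1)\hat{z}^{s(\gamma_1)}\cdots c(\gamma_s)\hat{z}^{s(\gamma_s)}$. Using the product rule $\hat{z}^{a}\hat{z}^{b}=q^{\frac12 a\wedge b}\hat{z}^{a+b}$ of $\CC_q[M]$ repeatedly, an ordered monomial $\hat{z}^{s(\gamma_1)}\cdots\hat{z}^{s(\gamma_s)}$ equals $q^{\frac12\sum_{i<j}s(\gamma_i)\wedge s(\gamma_j)}\,\hat{z}^{\sum_i s(\gamma_i)}$; this is the standard normal-ordering identity, proved by induction on $s$. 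Since the $c(\gamma_i)$ lie in the commutative ring $R_u$, they pull out freely. Hence the coefficient of $\hat{\vartheta}_0$, i.e.\ the $\hat{z}^0$-coefficient, picks out exactly those tuples with $\sum_i s(\gamma_i)=0$, contributing $c(\gamma_1)\cdots c(\gamma_s)\,q^{\frac12\sum_{1\le i<j\le s}s(\gamma_i)\wedge s(\gamma_j)}$.

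The remaining discrepancy is the range of the phase: the right-hand side of Lemma \ref{Lemma : structureconstant} has $\prod_{2\le i<j\le s}$, whereas the naive expansion produces $\prod_{1\le i<j\le s}$, i.e.\ the extra factors $q^{\frac12\sum_{j=2}^{s}s(\gamma_1)\wedge s(\gamma_j)}=q^{\frac12 s(\gamma_1)\wedge(\sum_{j\ge 2}s(\gamma_j))}$. But on the locus $\sum_{i=1}^s s(\gamma_i)=0$ we have $\sum_{j\ge 2}s(\gamma_j)=-s(\gamma_1)$, so $s(\gamma_1)\wedge(\sum_{j\ge2}s(\gamma_j))=-s(\gamma_1)\wedge s(\gamma_1)=0$ and this extra factor is trivial. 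Therefore $\prod_{1\le i<j\le s}$ and $\prod_{2\le i<j\le s}$ agree on the support of the sum, and the two expressions coincide, giving $c^0_{r_1\dots r_s}=\langle\hat{\vartheta}_{r_1},\dots,\hat{\vartheta}_{r_s}\rangle$. The one genuine subtlety, and the step I expect to need the most care, is the role of the basepoint $Q$: in \eqref{quantum broken line count} the $Q$ is genuinely fixed and ``balanced'', whereas the structure constant $c^0_{r_1\dots r_s}$ is extracted ``for $Q$ sufficiently close to $r=0$''. I would invoke the standard consistency/independence-of-chamber property of broken line counts in a consistent (quantum) scattering diagram — that the theta functions $\hat{\vartheta}^Q_r$ transform compatibly under wall-crossing and the structure constants are independent of the generic $Q$ — to identify the two, so that evaluating at any fixed generic balanced $Q$ computes the same coefficient of $\hat{\vartheta}_0$. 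This is where a citation to \cite{bousseau2020quantum} (or the analogous classical statement in \cite{mandel2021theta}) does the work.
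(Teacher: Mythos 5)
Your proposal is correct and follows essentially the same route as the paper's proof: expand $\hat{\vartheta}^Q_{r_1}\cdots\hat{\vartheta}^Q_{r_s}$, extract the $\hat{z}^0$-coefficient to impose $\sum_i s(\gamma_i)=0$, and apply the normal-ordering identity in $\CC_q[M]$ together with the balance condition to see that the $i=1$ terms of the phase drop out (the paper does this by first substituting $\hat{z}^{s(\gamma_1)}=\hat{z}^{-\sum_{i\ge 2}s(\gamma_i)}$, you do it by expanding first and cancelling after — the same computation in a different order). Your closing remark on the basepoint $Q$ is a sensible extra check, though the paper simply takes $Q$ near the origin in both constructions and does not dwell on it.
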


\begin{proof}
  Taking the product of theta functions gives
  $$\sum_{(\gamma_1,\ldots,\gamma_s)} c(\gamma_1) \hat{z}^{\varv(\gamma_1)} \cdots c(\gamma_s)\hat{z}^{\varv(\gamma_s)},$$
  where the sum is over tuples $(\gamma_1,\dots,\gamma_s)$  of broken lines such that $\gamma_i(0) = Q$ for some fixed $Q$ (near the origin). Taking the coefficient of $\hat{\vartheta}_0$ equal to $1$ ensures that $\varv(\gamma_1) + \dots + \varv(\gamma_s) = 0$ so that we have 
  $$c^0_{r_1 \cdots r_s} = \sum_{(\gamma_1,\ldots,\gamma_s)} c(\gamma_1)  \cdots c(\gamma_s)\hat{z}^{-\sum_{i=2}^{s}\varv(\gamma_i)}\hat{z}^{\varv(\gamma_2)}\cdots\hat{z}^{\varv(\gamma_s)},$$
  and the result now follows from the product rule $\hat{z}^{r} \cdot \hat{z}^{r'} = q^{\frac{1}{2}r \wedge r'}\hat{z}^{r+r'}$.
\end{proof}
 
\begin{remark}
The quantum mirror of \cite{bousseau2020quantum} is defined with respect to a different quantum scattering diagram, analogous to the canonical scattering diagram of \cite{gross2015mirror}, but the output is essentially the same, and we explain this briefly. The algebra is defined by fixing the data $(B,\Sigma)$, $P$, $J$, and $\varphi$. Here $(B,\Sigma)$ is an integral affine manifold (with singularities) defined via the geometry of $(Y_m,\partial Y_m)$, $P$ is a toric monoid containing $\mathrm{NE}(Y_m)$, $\varphi$ is a $P_{\RR}^{\mathrm{gp}}$-valued PL function on $B$, and $J$ is a monomial ideal of $P$. The product structure is also defined with respect to the scattering diagram by counts of broken lines. By \cite[Proposition 4.10]{bousseau2020quantum}, the canonical scattering diagram is related to the scattering diagram of Definition~\ref{Definition : scattering completion}, by a piecewise-linear map. As a result, broken lines go to broken lines, and the difference in their definition of the product of \cite[Theorem 3.11]{bousseau2020quantum} cancels with the presence of the PL function to recover the same product structure.
\end{remark}

\subsection{Quantum scattering to $\boldsymbol{q}$-refined tropical curves}

Let
$$\left[\left\langle \hat{\vartheta}_{r_1}, \dots, \hat{\vartheta}_{r_n} \right\rangle\right]^{\mathsf{sym}} = \frac{1}{(s-1)!}\sum_{\omega \in \Omega_s}\left\langle \hat{\vartheta}_{r_{\omega(1)}},\ldots,\hat{\vartheta}_{r_{\omega(s)}} \right\rangle.$$

\begin{lemma}
    In the factored scattering diagram, there is an equality 
\begin{equation*} 
  \left[\left\langle \hat{\vartheta}_{r_1}, \dots, \hat{\vartheta}_{r_n} \right\rangle\right]^{\mathsf{sym}} = \sum_{p} \sum_{k \vdash p} \sum_{A_{j \ell}} \left( \sum_{\Gamma \in T_{A_{j \ell}}} \prod_{V \in V(\Gamma)}m_{V}\left(q^{\frac{1}{2}}\right)\right) \prod_{j=1}^n \prod_{\ell \geq 1} \left(\frac{(-1)^{\ell -1}}{\ell} \frac{q^{\frac{1}{2}} - q^{-\frac{1}{2}}}{q^{\frac{\ell}{2}} - q^{-\frac{\ell}{2}}}\right)^{k_{\ell j}} \left(\ell!^{k_{\ell j}}\right) \left(\prod_{A \in A_{j \ell}} \prod_{a \in A} u_{ja}\right).
\end{equation*}
The first sum is over all $p=(p_1,\dots,p_n)$ such that $\sum_{i=1}^n p_i m_i = \sum_{j=1}^s r_j$. Here, $k \vdash p$ denotes a partition  $(k_{\ell j})_{\ell \geq 1}$ of $p_j$ for each $j=1,\dots,n$, $A_{j \ell}$ is a set of $k_{\ell j}$ disjoint subsets of\, $\{1,\dots,N\}$ of size $\ell$, and $T_{A_{j \ell}}$ is the set of genus zero tropical curves $\Gamma$ having an unbounded edge of asymptotic direction $r_i$ for $i=1,\dots, s$. Furthermore, for every $j=1,\dots,n$ and $\ell \geq 1$, $A \in A_{j \ell}$ is an unbounded edge of weight $\ell$ coinciding with $\fd_{j \ell A}$, with an $s$-valent point condition at a point $Q \in \RR^2$.
\end{lemma}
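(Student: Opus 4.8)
The plan is to compute $[\langle \hat{\vartheta}_{r_1}, \dots, \hat{\vartheta}_{r_n} \rangle]^{\mathsf{sym}}$ directly from the definition of quantum broken lines in the factored scattering diagram, and match the result term-by-term with the sum over tropical curves. First I would recall that by Lemma \ref{Lemma : structureconstant}, $[\langle \hat{\vartheta}_{r_1}, \dots, \hat{\vartheta}_{r_n} \rangle]^{\mathsf{sym}}$ is the symmetrisation of the coefficient of $\hat{\vartheta}_0$ in the product $\hat{\vartheta}_{r_1}^Q \cdots \hat{\vartheta}_{r_s}^Q$. Each individual broken line $\gamma_i$ ending at $Q$ has, by the crossing rule, a monomial $c(\gamma_i)\hat{z}^{s(\gamma_i)}$ built up from iterated conjugations by the Hamiltonians $\hat{H}_{\fd}$ attached to the rays it crosses. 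The key input is Lemma \ref{Lemma : ray bijection}: each ray $(\fd,H_{\fd})$ of $S(\hat{\fD}_{\fm}^{\mathrm{split}})$ corresponds bijectively to a rational tropical curve, and the Hamiltonian $\hat{H}_{\fd}$ has the explicit form \eqref{eq : hamiltonian expression}, carrying a factor $m_\Gamma(q^{1/2})$ together with the combinatorial prefactors indexed by the unbounded ray data $A_{j\ell}^\Gamma$.

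The heart of the argument is to see how assembling $s$ broken lines that meet at a common point $Q$ and whose directions sum to zero produces a single tropical curve with an $s$-valent vertex at $Q$. Each broken line $\gamma_i$ contributes a sequence of edge-segments; where $\gamma_i$ bends, it has picked up a summand of a conjugation by some $\hat{H}_{\fd}$, and by the bijection that $\fd$ is itself a tropical curve $\Gamma_{\fd}$ glued on at the bend. Concatenating the broken lines at $Q$, and attaching at each bend the tropical curve supplied by the crossed ray, yields a connected genus-zero tropical curve $\Gamma$ with an $s$-valent vertex at $Q$ whose outgoing edges have directions $r_1,\dots,r_s$, plus unbounded edges of weight $\ell$ lying on the $\fd_{j\ell A}$ — exactly the curves parametrised by $T_{A_{j\ell}}$. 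I would organise the bookkeeping so that the $u_{ja}$-variables track which initial rays are used: the disjointness conditions (iii) in Lemma \ref{Lemma : ray bijection} and the nilpotency $u_{ja}^2 = 0$ force distinct subsets $A$, which is precisely what the index set $A_{j\ell}$ in the statement records. The multiplicity $m_\Gamma(q^{1/2}) = \prod_V m_V(q^{1/2})$ factors over vertices, and the product of all the $\hat{H}_{\fd}$-prefactors collects into $\prod_j \prod_\ell \left(\frac{(-1)^{\ell-1}}{\ell}\frac{q^{1/2}-q^{-1/2}}{q^{\ell/2}-q^{-\ell/2}}\right)^{k_{\ell j}} (\ell!)^{k_{\ell j}} \prod_{A \in A_{j\ell}}\prod_{a\in A} u_{ja}$, as required.

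The remaining point is the origin of the symmetrisation and the $s$-valent vertex multiplicity $m_V$ from Definition \ref{Definition : blechman-shustin}. When one expands the ordered product $c(\gamma_1)\hat{z}^{s(\gamma_1)}\cdots c(\gamma_s)\hat{z}^{s(\gamma_s)}$ using the non-commutative rule $\hat{z}^r \hat{z}^{r'} = q^{\frac12 r\wedge r'}\hat{z}^{r+r'}$, one picks up the factor $\prod_{2\le i<j\le s} q^{\frac12 s(\gamma_i)\wedge s(\gamma_j)}$; averaging over the $(s-1)!$ cyclic orderings $\omega \in \Omega_s$ of the broken lines meeting at $Q$ turns this into precisely $\frac{1}{(s-1)!}\sum_{\omega\in\Omega_s} q^{k(\omega)/2}$, which is the $m_V$ assigned to a pointed $s$-valent vertex. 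Thus the symmetrised broken-line count reorganises into a sum over tropical curves $\Gamma$ with an $s$-valent point condition at $Q$, weighted by $\prod_V m_V(q^{1/2})$. The main obstacle I anticipate is the combinatorial gluing step: verifying carefully that the correspondence between $s$-tuples of broken lines meeting at $Q$ and tropical curves in $T_{A_{j\ell}}$ is a bijection — that no tropical curve is double-counted and none is missed — and that under this bijection the conjugation prefactors and the $q$-powers from the quantum torus product multiply out to exactly the claimed expression. This requires chasing the recursion of \cite[Lemma 6.1]{bousseau2020quantumtrop} through the conjugation operators and matching it against the balancing at the central vertex; I would handle it by an induction on the number of bends, reducing to the single-bend case handled by Lemma \ref{Lemma : ray bijection}.
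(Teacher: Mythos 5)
Your proposal follows essentially the same route as the paper: expand the symmetrised structure constant via the broken-line count, invoke Lemma \ref{Lemma : ray bijection} and the explicit Hamiltonian formula \eqref{eq : hamiltonian expression}, use the nilpotency $u_{ja}^2=0$ to enforce disjointness, reassemble the $s$ broken lines with their attached scattering rays into a genus-zero tropical curve with an $s$-valent vertex at $Q$, and recognise the pointed-vertex multiplicity of Definition \ref{Definition : blechman-shustin} as arising from the quantum-torus $q$-powers after cyclic averaging. The bijection step you flag as the main obstacle is exactly where the paper also concentrates its argument, and your plan to handle it via the bend recursion of \cite[Lemma 6.1]{bousseau2020quantumtrop} reduced to the single-bend case is consistent with what the paper does.
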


\begin{proof}
    We have that 
\begin{equation}\label{eq : coefficient scatter}
    \left\langle \vartheta_{r_1}, \dots, \vartheta_{r_n} \right\rangle =\sum_{(\gamma_1,\dots,\gamma_s)} c(\gamma_1) \cdots  c(\gamma_s) \prod_{2 \leq i < j \leq s} q^{\frac{1}{2}\varv(\gamma_i) \wedge \varv(\gamma_j)}, 
\end{equation}
 where the sum is taken over tuples of quantum broken lines where $\gamma_i$ has start direction $r_i$ and ends at $Q$, and such that the broken lines collectively are balanced at $Q$. For each $i=1,\dots, s$, the broken line $\gamma_i$ bends at various rays $\fd$ in the factored scattering diagram. Recall that by Lemma~\ref{Lemma : ray bijection} there is a bijection between rays $\fd$ of the factored scattering diagram and tropical curves such that the Hamiltonian associated to the ray is given by \eqref{eq : hamiltonian expression}.

 If $\hat{z}^k$ is the monomial attached to a segment of the broken line $\gamma_i$, the wall-crossing automorphism of $\fd$ acts by 
 $$\hat{z}^k \longmapsto \exp\left(\hat{H_{\fd}}\right)\hat{z}^k \exp\left(-\hat{H}_{\fd}\right).$$
 Since the variables $u_{ij}^2$ are equal to $0$, this implies that if $\gamma_i$ bends when it hits $\fd$ at $V$, the outgoing Hamiltonian picks up a factor of
 $$m_V\left(q^{\frac{1}{2}}\right) \cdot m_{\Gamma}\left(q^{\frac{1}{2}}\right)\prod_{j=1}^n \prod_{\ell \geq 1}\left(\frac{(-1)^{\ell}}{\ell} \frac{q^{\frac{1}{2}} - q^{-\frac{1}{2}}}{q^{\frac{\ell}{2}} - q^{-\frac{\ell}{2}}}\right)^{k_{\ell,j}} (\ell !)^{k_{\ell,j}} \left(\prod_{A \in A^{\Gamma}_{j \ell}} \prod_{a \in A} u_{ja}\right),$$
    where the edges are considered to be the incoming and outgoing edges of the broken line together with the part of $\fd$ which has finite length. Adjoining this part of the tropical curve to $\gamma_i$, one can associate to the tuple $(\gamma_1,\dots,\gamma_n)$ a tropical curve with an $s$-valent vertex. 

    On the other hand, every tropical curve with an $s$-valent vertex at $Q$, with asymptotic directions given by $r_1,\dots,r_s$ as well as some subset of the $\fd_{j \ell A}$, gives rise to broken lines $(\gamma_1,\dots,\gamma_s)$ appearing in the right-hand side of \eqref{eq : coefficient scatter}. The result now follows from the fact that the $q$-refined multiplicity of the $s$-valent vertex as in Definition~\ref{Definition : blechman-shustin} appears from the factor $$ \prod_{2 \leq i < j \leq s} q^{\frac{1}{2}\varv(\gamma_i) \wedge \varv(\gamma_j)}$$ after averaging over cyclic permutations.
\end{proof}

\begin{corollary}\label{corollary : scatter to tropical}
    There is an equality 
    $$ \left[\left\langle\hat{\vartheta}_{r_1},\ldots,\hat{\vartheta}_{r_s}\right\rangle\right]^\mathsf{sym}
    = \sum_p \sum_{k \vdash p} \mathsf{N}^{\Delta,s}_{\mathsf{trop, \Delta_{k}}}(q)\left( \prod_{j=1}^n \prod_{\ell \geq 1} \frac{1}{k_{\ell j}!} \left(\frac{(-1)^{\ell -1}}{\ell} \frac{q^{\frac{1}{2}} - q^{-\frac{1}{2}}}{q^{\frac{\ell}{2}} - q^{-\frac{\ell}{2}}}\right)^{k_{\ell j}}\right)\left(\prod_{j=1}^n t_j^{p_j}\right).$$
    The first sum is over all $p=(p_1,\dots,p_n)$ such that $\sum_{i=1}^n p_i m_i = \sum_{j=1}^s r_j$. Here, $k \vdash p$ denotes a partition $(k_{\ell j})_{\ell \geq 1}$ of $p_j$ for each $j=1,\dots,n$, $\Delta$ consists of\, $k_{\ell j}$ copies of the vector $-\ell m_j$ as well as $r_1, \dots, r_s$, 
    and $\Delta_k$ denotes $\Delta \setminus \{r_1,\dots,r_s\}$.
\end{corollary}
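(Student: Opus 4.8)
The plan is to derive Corollary \ref{corollary : scatter to tropical} from the preceding Lemma by collapsing the auxiliary sum over the subset-families $A_{j\ell}$, which lives in the ring $\tilde R_N$, back down to a monomial in $R_N$.

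First I would show that, for fixed partition data $k=(k_{\ell j})$, the bracketed tropical count $\sum_{\Gamma\in T_{A_{j\ell}}}\prod_{V\in V(\Gamma)}m_V(q^{\frac{1}{2}})$ appearing in the Lemma does not depend on the particular family $A_{j\ell}$, only on $k$. Replacing one admissible family by another with the same cardinalities $k_{\ell j}$ merely replaces the generic positions $c_{j\ell A}$ of the fixed unbounded rays $\fd_{j\ell A}$ by another generic configuration of the same number of points; by Proposition \ref{prop:TropCurveCount} and the position-independence noted after Definition \ref{defn:TropCurveCount}, this leaves the $q$-refined count unchanged. Unwinding Definition \ref{defn:TropCurveCount} and the set-up of $T_{A_{j\ell}}$, the common value is exactly $\mathsf{N}^{\Delta,s}_{\mathsf{trop},\Delta_k}(q)$, where $\Delta$ is the multiset built from $k_{\ell j}$ copies of $-\ell m_j$ together with $r_1,\dots,r_s$, the subset $\Delta_k=\Delta\setminus\{r_1,\dots,r_s\}$ consists of the rays held fixed in position, and the $s$-valent vertex at $Q$ encodes the single point-with-$\psi^{s-2}$ constraint.

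With this in hand, the tropical count and the coefficients $\big(\tfrac{(-1)^{\ell-1}}{\ell}\tfrac{q^{\frac{1}{2}}-q^{-\frac{1}{2}}}{q^{\frac{\ell}{2}}-q^{-\frac{\ell}{2}}}\big)^{k_{\ell j}}(\ell!)^{k_{\ell j}}$ pull out of the sum over $A_{j\ell}$, and what remains is the combinatorial identity in $\tilde R_N$
\[
\sum_{A_{j\ell}}\ \prod_{j=1}^{n}\prod_{\ell\geq 1}\ \prod_{A\in A_{j\ell}}\ \prod_{a\in A}u_{ja}\ =\ \Big(\prod_{j=1}^{n}\prod_{\ell\geq 1}\frac{1}{(\ell!)^{k_{\ell j}}\,k_{\ell j}!}\Big)\prod_{j=1}^{n}t_j^{p_j}.
\]
To prove it, note the left-hand side factors over $j$ (only subsets sharing a first index interact) and that any configuration with a repeated index contributes $0$ since $u_{ja}^2=0$; thus for each $j$ the disjoint union of the chosen subsets is a set $S_j\subseteq\{1,\dots,N\}$ of size $\sum_\ell \ell\, k_{\ell j}=p_j$, and the number of decompositions of a fixed such $S_j$ into $k_{\ell j}$ unordered blocks of size $\ell$ for each $\ell$ equals the multinomial coefficient $p_j!\big/\prod_\ell\big((\ell!)^{k_{\ell j}}k_{\ell j}!\big)$. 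Comparing with $t_j^{p_j}=\big(\sum_a u_{ja}\big)^{p_j}=\sum_{|S|=p_j}p_j!\prod_{a\in S}u_{ja}$, which comes from the embedding $R_N\hookrightarrow\tilde R_N$, yields the identity.

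Substituting this back into the Lemma, the factors $(\ell!)^{k_{\ell j}}$ cancel the denominators and, using injectivity of $R_N\hookrightarrow\tilde R_N$ to descend to $R_N$, the right-hand side collapses to $\sum_p\sum_{k\vdash p}\mathsf{N}^{\Delta,s}_{\mathsf{trop},\Delta_k}(q)\big(\prod_{j,\ell}\tfrac{1}{k_{\ell j}!}\big(\tfrac{(-1)^{\ell-1}}{\ell}\tfrac{q^{\frac{1}{2}}-q^{-\frac{1}{2}}}{q^{\frac{\ell}{2}}-q^{-\frac{\ell}{2}}}\big)^{k_{\ell j}}\big)\prod_j t_j^{p_j}$, which is the claim. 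The part requiring genuine care — and hence the main obstacle — is the first step: one must check precisely that $T_{A_{j\ell}}$ is the tropical counting problem of Definition \ref{defn:TropCurveCount} for the stated $\Delta$ and $\Delta_F=\Delta_k$ (in particular that the $s$-valent vertex at $Q$ is a generic member of the corresponding locus and matches the point-plus-descendant data), and that genericity of the shifts $c_{j\ell A}$ simultaneously gives finiteness, rigidity and position-independence of the count.
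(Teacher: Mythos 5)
Your proof is correct and follows essentially the same route as the paper's: you identify $\sum_{\Gamma\in T_{A_{j\ell}}}\prod_V m_V(q^{\frac12})$ with $\mathsf{N}^{\Delta,s}_{\mathsf{trop},\Delta_k}(q)$ (justified by the position-independence of the rigid count from Proposition \ref{prop:TropCurveCount}), and then collapse the sum over families $A_{j\ell}$ via the same multinomial computation the paper uses, namely that the number of ways to partition a fixed $B\subset\{1,\dots,N\}$ of size $p_j$ into $k_{\ell j}$ blocks of size $\ell$ is $p_j!\big/\prod_{\ell}(\ell!)^{k_{\ell j}}k_{\ell j}!$, combined with $t_j^{p_j}=\sum_{|B|=p_j}p_j!\prod_{b\in B}u_{jb}$. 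You spell out slightly more explicitly the independence of the tropical count from the particular subsets and the final descent along $R_N\hookrightarrow\tilde R_N$, but the argument is the same as in the paper (which in turn cites \cite[Proof of Proposition 6.2]{bousseau2020quantumtrop}).
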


\begin{proof}
This follows the same argument as in~\cite[Proof of Proposition 6.2]{bousseau2020quantumtrop}. Let $B=\cup_{A \in A_{j \ell}} A$ be a subset of $\{1,\dots,N\}$ of size $\sum_{\ell \geq 1} \ell k_{\ell j} = p_j$. Conversely, the number of ways to write a set
$B$ of $p_j = \sum_{\ell \geq 1} \ell k_{\ell j}$ elements as a disjoint union of subsets, $k_{\ell j}$ of them being of size $\ell$, is equal to $$\frac{p_j!}{\prod_{\ell \geq 1} k_{\ell j}! (\ell !)^{k_{\ell j}}}.$$ Replacing the sum over $A_{j\ell}$ by a sum over $B$ and using the fact that $$t_j^{p_j} = \sum_{B \subset \{1,\dots, N\}, |B| = p_j} p_j! \prod_{b \in B} u_{jb}$$ gives the result after identifying
\begin{equation*}\pushQED{\qed}
  \mathsf{N}^{\Delta,s}_{\mathsf{trop, \Delta_{k}}}(q) = \sum_{\Gamma \in T_{A_{j \ell}}} \prod_{V \in V(\Gamma)}m_{V}\left(q^{\frac{1}{2}}\right).\qedhere \popQED
  \end{equation*}
\renewcommand{\qed}{}    
\end{proof}

\section{From logarithmic Gromov--Witten invariants to tropical curve counts}

In this section, we use the degeneration formula to relate the logarithmic Gromov--Witten invariants of $Y_m$ to toric logarithmic Gromov--Witten invariants. The latter can be connected with refined tropical curve counting due to Theorem~\ref{thm : bdyrefined}. This section follows~\cite[Section 5]{bousseau2020quantum}.

\subsection{Degeneration}

First we recall the degeneration of~\cite[Section 5.3]{gross2010vertex} (see also \cite{bousseau2020quantumtrop,mandel2021theta}). Recall that the tuple $m= (m_1, \dots,m_n)$ of non-zero vectors in $\ZZ^2$ fixes a toric surface $\overline{Y}_m$. For every $j=1,\dots,n$, let $x_j$ denote a general point on the interior of the boundary divisor $D_{m_j}$. Take the total space of the deformation to the normal cone of $D_{m_1} \cup \dots \cup D_{m_n}$ in $\overline{Y}_m$ (we can assume the $D_{m_j}$ are disjoint). Then blow up the strict transforms of the constant sections determined by $x_j$ to obtain a family
$$\epsilon \colon \cY_m \longrightarrow \AAA^1.$$
The general fibre is $Y_m$, and the central fibre is $$\overline{Y}_m \cup \bigcup_{j=1}^n \tilde{\PP}_{j},$$ where $\tilde{\PP}_j$ is the blow-up of the projective completion of the normal bundle of $D_j$ in $\overline{Y}_m$ at the point corresponding to $x_j$. Equipping $\cY_m$ with the divisorial logarithmic structure with respect to the central fibre together with the strict transform of $\partial \overline{Y}_m \times \AAA^1$ and equipping $\AAA^1$ with the divisorial logarithmic structure with respect to the origin makes $\epsilon$ into a logarithmically smooth morphism. Restricting to the central fibre, there is a logarithmically smooth morphism $\cY_{m,0} \rightarrow \mathsf{pt}_{\NN}$. Write $\mathsf{M}_{g,\vartheta}(\cY_{m,0},\beta_p)$ for the moduli space of genus~$g$, $(s+1)$-pointed stable logarithmic maps to $\cY_{m,0}$ over $\mathrm{pt}_{\NN}$, of class $\beta_p$, with tangency given by $\vartheta$.

By the deformation invariance of logarithmic Gromov--Witten invariants, we have that $$\mathsf{N}_{g,\vartheta}^{\beta_p} = \int_{\left[\mathsf{M}_{g,\vartheta}(\mathcal{Y}_{m,0},\beta_p)\right]^\mathsf{vir}}(-1)^g\lambda_g \ev^* (\mathrm{pt}) \psi^{s-2}.$$

\subsection{Decomposition}

Tropical data associated to the morphism $\cY_{m,0} \rightarrow \mathsf{pt}_{\NN}$ may be recorded as a polyhedral complex $\cY^{\mathsf{trop}}_{m,0}$ with metrised edge lengths; see~\cite[Section 5.3]{bousseau2020quantumtrop}. Our polyhedral complex is obtained by subdividing the fan of $\overline{Y}_m$, whose vertex is called $v_0$, by placing a vertex $v_j$ at distance 1 along each ray in direction $m_j$ and adding unbounded edges attached to $v_j$ parallel to those rays adjacent to $-\RR_{\geq 0}m_{j}$.

\begin{figure}
    \centering
    \includegraphics[width=10cm]{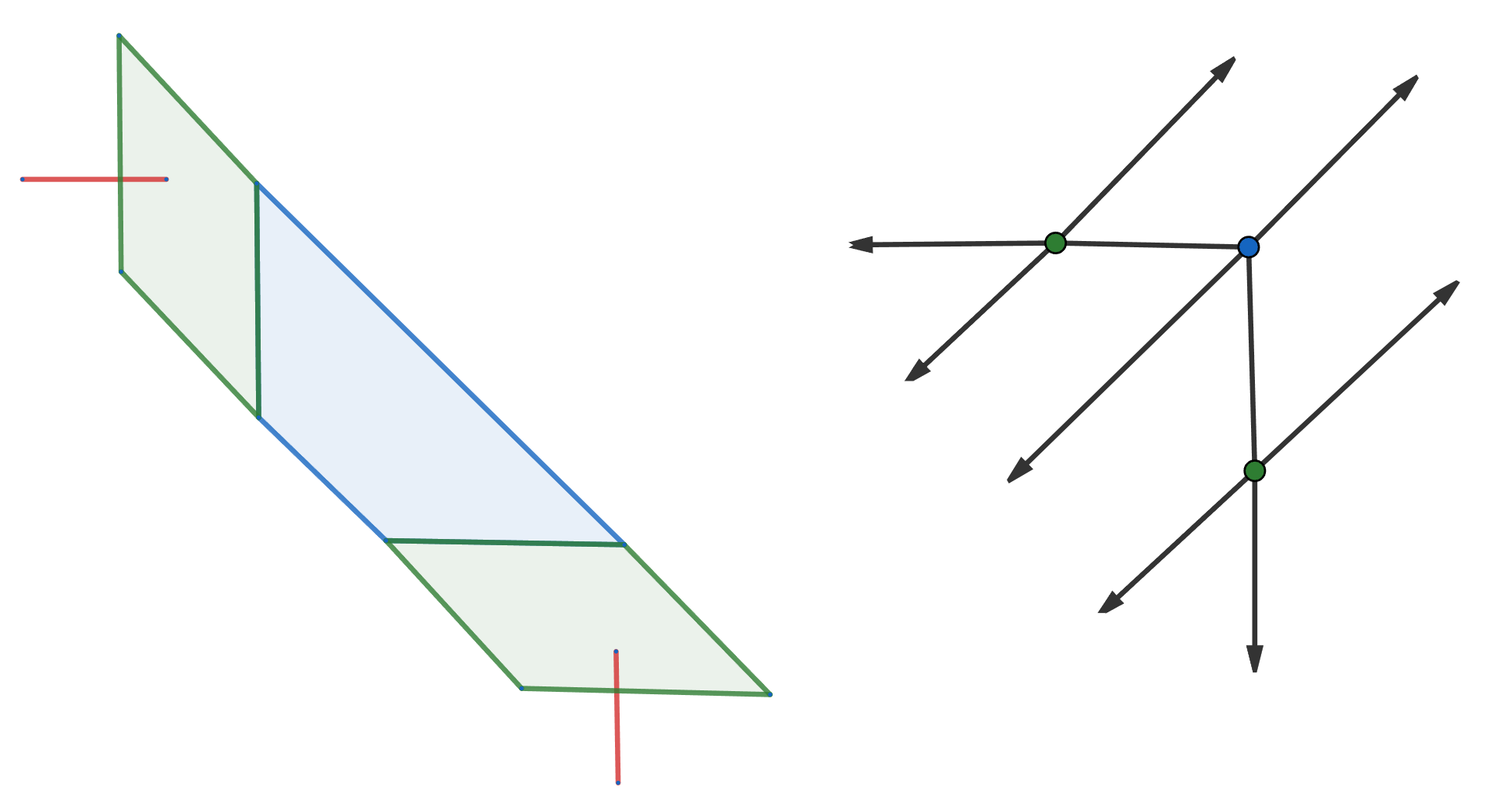}
    \caption{An example of the central fibre $\cY_{m,0}$ and the tropicalisation $\cY_{m,0}^{\mathsf{trop}}$. }
    \label{Figure : Target example}
\end{figure}

The decomposition formula of \cite{abramovich2020decomposition} allows us to write $N_{g,\vartheta}^{\beta_p}$ as a sum over rigid tropical curves $h \colon \Gamma \rightarrow \cY^{\mathsf{trop}}_{m,0}$.

The moduli space of $(s+1)$-pointed, genus $g$ stable logarithmic maps marked by $h$, denoted by $\mathsf{M}^{h}_{g,\vartheta}$, is a proper Deligne--Mumford stack with a natural perfect obstruction theory and a forgetful morphism
$$i_{h} \colon \mathsf{M}^{h}_{g,\vartheta} \longrightarrow \mathsf{M}_{g,\vartheta}(Y_{m,0},\beta_p).$$
The decomposition formula \cite{abramovich2020decomposition} tells us that the following holds. 

\begin{proposition}
  We have that 
  $$\mathsf{N}^{\beta_p}_{g,\vartheta} = \sum_h \frac{n_h}{|\mathsf{Aut}(h)|} \mathsf{N}_h,$$
  where
  $$\mathsf{N}_h = \int_{\left[\mathsf{M}_{g,\vartheta}^h\right]^\mathsf{vir}}(-1)^g\lambda_g \ev^* (\mathrm{pt}) \psi^{s-2}.$$
\end{proposition}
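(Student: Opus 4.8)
The plan is to deduce this as a direct application of the decomposition formula of \cite{abramovich2020decomposition} to the logarithmically smooth morphism $\cY_{m,0} \to \mathsf{pt}_{\NN}$, together with the projection formula, after observing that every cohomological insertion appearing in $\mathsf{N}^{\beta_p}_{g,\vartheta}$ is pulled back along the morphisms $i_h$.

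First I would recall the decomposition formula at the level of virtual classes. Writing $\mathsf{M} = \mathsf{M}_{g,\vartheta}(\cY_{m,0},\beta_p)$, the metrized polyhedral complex $\cY^{\mathsf{trop}}_{m,0}$ indexes the logarithmic strata, and to each rigid tropical curve $h : \Gamma \to \cY^{\mathsf{trop}}_{m,0}$ of the prescribed genus $g$, degree $\beta_p$ and tangency $\vartheta$ one associates the proper Deligne--Mumford stack $\mathsf{M}^h_{g,\vartheta}$ with its perfect obstruction theory and the forgetful morphism $i_h : \mathsf{M}^h_{g,\vartheta} \to \mathsf{M}$. The theorem of \cite{abramovich2020decomposition} then gives
$$[\mathsf{M}]^{\vir} = \sum_h \frac{n_h}{|\mathsf{Aut}(h)|}\, (i_h)_* [\mathsf{M}^h_{g,\vartheta}]^{\vir}$$
in $A_*(\mathsf{M})$, where $n_h$ is the tropical multiplicity of $h$; the only hypotheses needed are properness of $\mathsf{M}$ and logarithmic smoothness of $\epsilon$ restricted to the central fibre, both of which were arranged in the construction of $\cY_m$ above.

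Next I would verify that the insertions are compatible with the $i_h$. The classes $\lambda_g$ and $\psi$ entering $\mathsf{N}^{\beta_p}_{g,\vartheta}$ are, by the earlier Proposition identifying $\overline{\lambda}_g$ with $\lambda_g$ and by definition of $\psi$, pulled back from $\overline{\mathcal{M}}_{g,s+1}$ along the forgetful morphism, which factors through $i_h$; hence $i_h^*\lambda_g = \lambda_g$ and $i_h^*\psi = \psi$. Similarly the evaluation morphism at the $(s+1)$st marking on $\mathsf{M}$ restricts along $i_h$ to the evaluation morphism on $\mathsf{M}^h_{g,\vartheta}$, so that $i_h^*\ev^*(\mathrm{pt})$ is the corresponding point pullback there. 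Capping $(-1)^g\lambda_g\,\ev^*(\mathrm{pt})\,\psi^{s-2}$ against the displayed decomposition of $[\mathsf{M}]^{\vir}$ and applying the projection formula to each summand then yields $\mathsf{N}^{\beta_p}_{g,\vartheta} = \sum_h \tfrac{n_h}{|\mathsf{Aut}(h)|}\mathsf{N}_h$ with $\mathsf{N}_h$ as stated.

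The content is therefore not in the algebra but in checking that the hypotheses of \cite{abramovich2020decomposition} genuinely apply and in correctly identifying the multiplicity $n_h$: one must confirm logarithmic smoothness of $\cY_{m,0}\to\mathsf{pt}_{\NN}$ with its divisorial structure (done above), and that the obstruction theory on $\mathsf{M}^h_{g,\vartheta}$ used there is compatible under $i_h$ with that on $\mathsf{M}$ so that virtual pushforward is legitimate. The reason we first invoked the earlier Proposition, pulling $\psi$ and $\lambda_g$ back from $\overline{\mathcal{M}}_{g,s+1}$, is precisely so that these descendant and Hodge insertions are insensitive to the decomposition of the virtual class; the more delicate bookkeeping of how $\psi^{s-2}$ and $\lambda_g$ redistribute over the vertices of $\Gamma$ — the "extra subtleties in this higher genus, descendant setting" flagged in \cite{kennedyhunt2023tropical} — is deferred to the subsequent splitting of $\mathsf{N}_h$ into vertex contributions, and does not enter the present statement.
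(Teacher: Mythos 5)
Your proposal is correct and matches the paper's treatment: the paper states this Proposition as a direct consequence of the decomposition formula of \cite{abramovich2020decomposition} applied to $\mathsf{M}_{g,\vartheta}(\cY_{m,0},\beta_p)$ over $\mathsf{pt}_{\NN}$, with no further argument given. Your additional verification that $\lambda_g$, $\psi$ and $\ev^*(\mathrm{pt})$ are pulled back along $i_h$ and that the projection formula converts the class-level decomposition into the numerical identity is exactly the (implicit) standard reasoning, so there is no substantive difference in approach.
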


In this situation, only certain rigid tropical types will have a non-vanishing contribution in the decomposition formula. In order to index these, we let $k=(k_1,\dots,k_n)$ be a partition of $p=(p_1,\dots,p_n)$, \textit{i.e.} $k_j$ equals $(k_{\ell j})_{\ell \geq 1}$, finitely many non-zero integers with $$\sum_{\ell \geq 1} \ell k_{\ell j} = p_{j}.$$
We write $k \vdash p$. Moreover, let $s(k) = \sum_{j=1}^n \sum_{\ell \geq 1} k_{\ell j}$ and $w(k) = (w_{1}(k), \dots, w_{s(k)}(k))$, where the $w_{j}(k) \in \ZZ^2$ for each $j$ are such that $w(k)$ contains $k_{\ell j}$ copies of the vector $-\ell m_{j}$. 

Recall that the vectors, $\vartheta_1,\dots,\vartheta_s$ are multiples of the rays of the fan of $\overline{Y}_m$. Suppose $\vartheta_{i_1},\dots,\vartheta_{i_b}$ are multiples of $-m_{a_1},\dots,-m_{a_b}$. 

We now define a class of rigid tropical curves 
   \begin{equation}\label{defn:GraphFromPartition}
        h_{k,\underline{g}}^s \colon \Gamma_{k,\underline{g}}^s \longrightarrow \cY_{m,0}^{\mathsf{trop}}       
   \end{equation} 
 depending on a partition $k \vdash p$ as well as a tuple $\underline{g} = (g_0,\dots,g_{s(k)})$  of non-negative integers with $\sum_{i=0}^{s(k)} g_i = g$.

\begin{definition}
 Let $\Gamma_{k,\underline{g}}^s$ be the genus zero graph with
   \begin{itemize}
       \item $V(\Gamma_{k,\underline{g}}^s) = \{V_0,\dots,V_{s(k)},V'_{1},\dots,V'_{b}\}$,
       \item $E_{f}(\Gamma_{k,\underline{g}}^s) = \{E_1,\dots,E_{s(k)},E'_1,\dots,E'_b\}$ with $V_0,V_j \in E_j$  and $V_0,V'_j \in E'_j$,
       \item $E_{\infty}(\Gamma_{k,\underline{g}}^s) = \{F_1,\dots,F_s,F\}$ with $V'_j \in F_{i_{j}}$ for $j=1,\dots,b$, $V_j \in F_j $ otherwise, and $V_0 \in F$.
\end{itemize}
To the graph $\Gamma^{s}_{k,\underline{g}}$ assign
\begin{itemize}
       \item the genus $g_j$ to $V_j$ for $j=0,\dots,s(k)$ and genus zero to $V'_j$ for $j=1,\dots,b$;
       \item the length $ \ell(E_j) = \frac{1}{|w_{j}(k)|}$ for $j=1,\dots,s(k)$ and the length $\ell(E'_j) = \frac{1}{|\vartheta_{i_j}|}$ for $j=1,\dots,b$.
\end{itemize}
We define a tropical curve $h_{k,\underline{g}}^s \colon \Gamma^s_{k,\underline{g}} \rightarrow \cY^{\mathsf{trop}}_{m,0}$ as follows: 
\begin{itemize}
    \item Set $h(V_0) = v_0$ and $h(V_j) = v_i$ if $w_{j} = -\ell m_i$, for $j=1,\dots,s(k)$.
    \item Set $h(V'_j) = v_{a_{j}}$ and $h(F_{i_j}) = v_{a_{j}} + \RR_{\geq 0} \vartheta_{i_j}$ for $j=1,\dots, b$, and $h(F_j) = \vartheta_j$ otherwise.
    \item Set $h(F)= v_0$.
    \item Decorate $V_0$ with the curve class $\beta_{w(k)} \in H_{2}(\overline{Y}_m,\ZZ)$ determined by the contact data $w(k)$.
    \item If $w_{j} = -\ell m_i$, decorate $V_{j}$ with the curve class $\ell [C_i]$, where $[C_i]$ corresponds to the strict transform of a fibre class passing through the point corresponding to $x_i$ in $\tilde{\PP}_i$, for $j=1,\dots,s(k)$.
    \item Decorate $V'_j$ with the curve class $|\vartheta_{i_j}|[C'_{i_j}]$, where $[C'_{i_j}]$ corresponds to the strict transform of a fibre class in $\tilde{\PP}_{i_j}$, for $j=1,\dots,b$.
\end{itemize} 
\end{definition}

\begin{figure}[H]\hspace{-6em}\vspace{-1em}
    \centering
    \includegraphics[width=9cm]{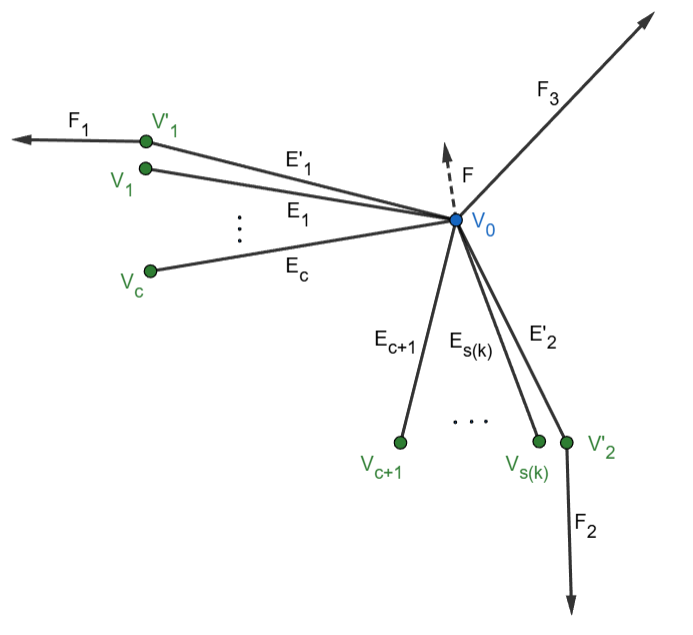}
    \caption{Example of $\Gamma_{k,\underline{g}}^s$ for target as in Figure~\ref{Figure : Target example}.}
    \label{Figure : tropical type example}
\end{figure}

We claim the only rigid tropical types appearing with non-vanishing contribution in the decomposition formula are the $h_{k,\underline{g}}^s$.

\begin{proposition}\label{proposition : rigid types}
    Every rigid tropical type $h$ with a non-zero contribution to the decomposition formula is of the form $h= h^s_{k,\underline{g}}$.
\end{proposition}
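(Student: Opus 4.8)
The plan is to follow the higher-genus, descendant refinement of the strategy in \cite[Section~5]{mandel2021theta}, \cite[Section~5]{bousseau2020quantumtrop} and \cite[Section~3]{kennedyhunt2023tropical}. Write $\bar\Gamma$ for the dual graph, with genus decorations $g_V$, of a generic stable logarithmic map parametrised by $\mathsf{M}^h_{g,\vartheta}$; by construction its tropicalisation is the rigid type $h\colon\Gamma\to\cY^{\mathsf{trop}}_{m,0}$, so the underlying graph of $\bar\Gamma$ is $\Gamma$. The first step is to use the $\lambda_g$-insertion to reduce to trees. Pulling the Hodge bundle back to $\mathsf{M}^h_{g,\vartheta}$ and using the normalisation sequence of the universal curve together with the residue map produces an exact sequence $0\to\bigoplus_V\mathbb{E}_{g_V}\to\mathbb{E}_g\to\mathcal{O}^{\oplus b_1(\Gamma)}\to 0$, so $(-1)^g\lambda_g$ pulls back to $\pm\big(\prod_V\lambda_{g_V}\big)c_{b_1(\Gamma)}\big(\mathcal{O}^{\oplus b_1(\Gamma)}\big)$, which vanishes unless $b_1(\Gamma)=0$. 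Hence $\mathsf{N}_h=0$ unless $\Gamma$ is a tree, in which case $\sum_V g_V=g$ and $\lambda_g=\prod_V\lambda_{g_V}$.

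Next I would locate the central vertex. One may take the point $P$ defining $\ev^*(\mathsf{pt})$ to be a generic point of the open torus of $\overline Y_m\subset\cY_{m,0}$, which tropicalises to $v_0$. The $(s+1)$-st marking $F$ lies on a single component, attached to a vertex $V_0\in V(\Gamma)$; since the only stratum of $\cY_{m,0}$ containing generic interior points of $\overline Y_m$ is $\overline Y_m$ itself, for $\ev^{-1}(P)$ to meet $\mathsf{M}^h_{g,\vartheta}$ that component must map into $\overline Y_m$ with its marked point able to pass through $P$, forcing $h(V_0)=v_0$; moreover, for non-trivial $\beta_p$ this component is not contracted, since otherwise (being contracted to the interior point $P$) it would be disjoint from every gluing divisor $D_{m_j}$, contradicting connectedness. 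The classes $\ev^*(\mathsf{pt})$ and $\psi^{s-2}$ are then supported on the factor of $\mathsf{M}^h_{g,\vartheta}$ coming from $V_0$.

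The main step, and the expected main obstacle, is to show that every remaining vertex is a leaf over one of the $v_i$. Here I would write $\mathsf{M}^h_{g,\vartheta}$, up to a finite morphism and automorphism factors, as the fibre product of the vertex moduli spaces $\mathsf{M}^V$ over the evaluations at the nodes, so that $\mathsf{N}_h$ factors as a product of vertex integrals $\int_{[\mathsf{M}^V]^{\vir}}(-1)^{g_V}\lambda_{g_V}(\cdots)$ with the additional descendant and point insertions on the $V_0$-factor. Since $\vdim\mathsf{M}^h_{g,\vartheta}$ equals $\vdim\mathsf{M}_{g,\vartheta}(Y_m,\beta_p)$ and the integrand has complementary degree, the dimension count is tight, hence tight vertex-by-vertex. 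The balancing condition together with the rigidity of $h$ and this vertex-wise bookkeeping then exclude: more than one vertex over $v_0$ (the unique tree-path between two such vertices would have to retrace its image, and the resulting configuration either fails to be rigid or overloads the dimension at an intermediate vertex); internal vertices lying in the interior of a two-dimensional cone; and vertices of valence $\ge 3$ away from $v_0$. What remains is a star: $V_0$ over $v_0$, joined by single bounded edges to leaves over the various $v_i$, plus — for each tangency marking whose direction is $-m_i$ — a bounded edge to a bivalent vertex over $v_i$ continued by that unbounded edge, and the remaining tangency markings emanating directly from $V_0$. A leaf $V$ over $v_i$ carries a genus-$g_V$ stable logarithmic map to $\tilde{\PP}_i$ meeting the gluing divisor at its single node with contact order the weight $w$ of its edge; after imposing the point constraint at that node, the $\lambda_{g_V}$-twisted vertex integral is non-zero only for the fibre class $w[C_i]$ (respectively $w[C'_i]$ at a bivalent $V'_j$), exactly as in \cite[Section~3]{kennedyhunt2023tropical}. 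The subtlety is precisely the one flagged in the introduction: making this dimension count rigorous requires carefully tracking how the virtual class, the Hodge class $\lambda_g$, and the descendant $\psi^{s-2}$ distribute over the vertices in this higher-genus descendant setting.

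Finally I would read off the combinatorial data. The multiset of contact orders of the central component with the divisors $D_{m_j}$ consists of vectors $-\ell m_j$ with multiplicities $k_{\ell j}$ satisfying $\sum_\ell \ell k_{\ell j}=\beta_p\cdot E_j=p_j$, i.e.\ a partition $k\vdash p$; the induced class at $V_0$ is the corresponding $\beta_{w(k)}$, and the genus splitting $\sum_V g_V=g$ records a tuple $\underline g$. The relation $h(V_j)-h(V_0)=\ell(E_j)w_j(k)$ with $h(V_0)=v_0$ and $h(V_j)=v_i$ forces $\ell(E_j)=1/|w_j(k)|$, and similarly $\ell(E'_j)=1/|\vartheta_{i_j}|$. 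This is exactly the tropical type $h^s_{k,\underline g}$ of Definition~\ref{defn:GraphFromPartition}, which completes the proof.
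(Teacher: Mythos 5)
Your proposal has the right skeleton and tracks the paper's strategy closely: kill loops via the $\lambda_g$-insertion (the normalization exact sequence for the Hodge bundle, correctly applied), force $h(V_0)=v_0$ from the interior point constraint at the last marking, factor $\mathsf{N}_h$ vertex-by-vertex via the gluing formula, and then run a dimension count to conclude that the remaining vertices are leaves over the $v_i$ carrying fibre classes. This matches the paper's structure (Lemmas~\ref{lem:ContinuePaths}, \ref{lem:continuable edges} and the final argument). Your derivation of edge lengths $\ell(E_j)=1/|w_j(k)|$ and the identification with $h^s_{k,\underline g}$ at the end is correct.

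The gap is in what you call the ``main step,'' where you write that ``the balancing condition together with the rigidity of $h$ and this vertex-wise bookkeeping then exclude'' the various unwanted configurations. That is a list of assertions, not a proof. The paper earns these conclusions from two concrete ingredients you have not supplied. First, the precise valency count at $V_0$: after subdivision so $h$ sends vertices to vertices (\cite[Remark~6.1.1]{ranganthan2022logarithmic}), the virtual dimension of the $V_0$-moduli is $g_0+k+k'$ where $k'$ counts adjacent bounded edges whose diagonal class contributes $\mathsf{pt}_E$ to the \emph{other} vertex, and comparing against the insertion degree $g_0+s+k$ forces $k'=s$. Without this identity your phrase ``the dimension count is tight, hence tight vertex-by-vertex'' is not usable: it does not tell you how the $\psi^{s-2}$ and the point class distribute over nodes, which is exactly the higher-genus descendant subtlety you yourself flag. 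Second, the path-continuation argument (the paper's Lemma~\ref{lem:ContinuePaths}, phrased via ``continuable edges''): for an edge not on a line through the origin, balancing orthogonal to that line propagates a path that must terminate either in a distinct unbounded edge or in a cycle; cycles die by $\lambda_g$, and in a tree with exactly $s+1$ unbounded edges this caps the number of forks. This is what rules out a non-continuable edge at $V_0$, a second vertex over $v_0$, and higher-valent vertices $V_+\ne V_0$. Your parenthetical remark about a tree-path ``retracing its image'' or ``overloading the dimension at an intermediate vertex'' gestures at the same phenomenon but does not establish it. You acknowledge the gap, but as written the proposal does not close it.

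A smaller point: you invoke the vertex-wise factorization of $\mathsf{N}_h$ as if it were routine, but in this higher-genus descendant setting that gluing statement (the paper's Proposition~\ref{Proposition : gluing}, via \cite{ranganthan2022logarithmic}) is itself one of the delicate inputs; the paper is explicit that the earlier gluing of \cite{bousseau2020quantumtrop} is no longer sufficient. Your argument should at minimum flag that this is being used and is nontrivial here.
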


The proof will use the following lemmas. We will say that an edge $E$ of a tropical curve is \textit{on a zero line} if the line containing $E$ passes through the origin.

\begin{lemma}\label{lem:ContinuePaths}
    Suppose $h\colon \Gamma \rightarrow \cY_{m,0}^{\mathsf{trop}}$ is a rigid tropical type. Fix an edge $\gamma_0$ and a vertex $V$ of $\gamma_0$. Assume $\gamma_0$ is not on a zero line. Then there is a sequence of edges $\gamma_0,\ldots,\gamma_n$ with the following properties:
    \begin{enumerate}
        \item\label{l:CP-1} The edges $\gamma_i$, $\gamma_{i+1}$ meet at a vertex, and $\gamma_0$, $\gamma_1$ meet at $V$.
        \item\label{l:CP-2} Either $\gamma_n$ is unbounded, or $\gamma_n = \gamma_0$.
    \end{enumerate}
\end{lemma}

\begin{proof}
    We first show that fixing an edge $\gamma_0$ not on a zero line and a vertex $v$ of $\gamma_0$, there exists an edge $\gamma_1$ which meets $\gamma_0$ at $v$ and is not on a zero line. Indeed, we can see this is true by applying the balancing condition at $v$ in the direction $\mathbf{e}$ orthogonal to the line joining $v$ to the origin. Since $\gamma_0$ is not orthogonal to~$\mathbf{e}$, the vertex $v$ must be adjacent to some other edge $\gamma_1$ whose dot product with $\mathbf{e}$ is non-zero.

    We now recursively construct our sequence of edges. By the previous paragraph, given a sequence $\gamma_1,\ldots,\gamma_i$ of edges with property~\eqref{l:CP-1} and such that $\gamma_i$ is bounded, there exists an edge $\gamma_{i+1}$ which does not lie on a zero line and which meets $\gamma_{i}$ at the vertex not adjacent to $\gamma_{i-1}$. Tropical curves have finitely many edges, and the process must terminate, which can only occur when we find some unbounded $\gamma_{n}$, or else we have constructed a cycle.
\end{proof}

We define a \textit{continuable} edge to be an edge which is either an unbounded edge or a bounded edge connected to a 2-valent vertex whose other edge is unbounded.

\begin{lemma}\label{lem:continuable edges}
    Suppose a rigid type $h\colon \Gamma \rightarrow \cY_{m,0}^{\mathsf{trop}}$ makes a non-zero contribution to the decomposition formula. Then $\Gamma$ contains a vertex $V_0$ with the following properties:
    \begin{enumerate}
        \item $V_0$ is mapped to the origin.
        \item $V_0$ has $s$ continuable edges.
    \end{enumerate} 
\end{lemma}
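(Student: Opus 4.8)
\textbf{Proof plan for Lemma \ref{lem:continuable edges}.} The plan is to combine the dimension/rigidity constraint on tropical types contributing to the decomposition formula with the combinatorial structure of $\cY^{\mathsf{trop}}_{m,0}$, using Lemma \ref{lem:ContinuePaths} as the engine that propagates edges along the graph. First I would recall the standard fact (as in \cite{abramovich2020decomposition,kennedyhunt2023tropical}) that a rigid tropical type making a non-zero contribution must have virtual dimension matching the expected count: because of the $\ev^*(\mathsf{pt})$ insertion at the last marking $F$ and the $\psi^{s-2}$ insertion, the curve class $\beta_p$ and the tangency profile $\vartheta$, the only types which survive are those whose underlying graph is ``as degenerate as possible'' — in particular there is a distinguished vertex carrying the point constraint, and it must be the vertex supporting the unbounded edge $F$ which evaluates to a general point of the interior of $\overline{Y}_m$; that forces $h(V_0) = v_0$, the cone point, which I am calling the origin. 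This establishes property (1).

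Next, for property (2), I would argue that $V_0$ must be $s$-valent with all $s$ of its non-$F$ edges continuable. The key point is a dimension count: a rigid type is one that cannot be deformed, and each bounded edge not pinned down by an incidence or balancing constraint contributes a modulus. Using Lemma \ref{lem:ContinuePaths}, every edge of $\Gamma$ not on a $0$ line sits in a chain that either closes into a cycle or runs out to an unbounded edge; cycles would increase the genus of $|\Gamma|$ beyond what is allowed (the target types $h^s_{k,\underline g}$ are genus zero as graphs), and chains that pass through $\geq 3$-valent vertices away from $V_0$ would create extra moduli unless those vertices are pinned — but the only things that can pin a vertex are the point $Q$ (used once, at $V_0$), the $\psi$-class, and the fixed asymptotic directions. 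I would show that after accounting for these, every edge chain emanating from $V_0$ must be \emph{short}: either directly unbounded, or a bounded edge to a $2$-valent vertex whose other edge is unbounded, i.e. continuable. Counting the asymptotic directions $r_1,\dots,r_s$ that must appear (the tangency data $\vartheta$), and noting each continuable chain from $V_0$ carries exactly one such direction (the exceptional-fibre directions $-\ell m_j$ get absorbed into the $V'_j$/$V_j$ vertices which sit over the $v_j$, not over $V_0$), gives precisely $s$ continuable edges at $V_0$.

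The main obstacle I anticipate is making the ``every chain from $V_0$ is short'' step rigorous: one has to rule out the scenario where a long path of bounded edges with intermediate trivalent vertices still yields a rigid, non-zero-contribution type. This requires the precise virtual dimension formula for $\mathsf{M}^h_{g,\vartheta}$ — here the $(-1)^g\lambda_g$ insertion is essential, since it is exactly what makes the local contributions of fibre-class vertices $V_j$, $V'_j$ rigid and numeric (a $\lambda_g$-GW invariant of $\mathbb{P}^1$-bundles), so that all the genuine moduli must be concentrated at $V_0$ and its immediate neighbours. I would handle this by adapting the rigidity analysis of \cite[Section 3]{kennedyhunt2023tropical}: show that any intermediate higher-valent vertex forces either a positive-dimensional family of tropical types (contradicting rigidity) or a vanishing $\lambda_g$-contribution (by a dimension mismatch on the corresponding piece of the virtual class). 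Once the chains are known to be short, properties (1) and (2) follow, and in fact this lemma feeds directly into Proposition \ref{proposition : rigid types}, whose proof then amounts to reading off that the only combinatorial possibility left is $h = h^s_{k,\underline g}$.
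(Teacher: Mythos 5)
The first part of your plan — using the interior point constraint at the non‑tangency marking $F$ to force $h(V_0)=v_0$ — is exactly how the paper establishes property (1), so you are on target there.

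For property (2) there is a genuine gap. You correctly anticipate that a virtual dimension formula is needed, but you never actually produce it, and the count of $s$ continuable edges you give at the end is circular: you obtain $s$ by pairing the tangency directions $r_1,\dots,r_s$ with the chains emanating from $V_0$, which presupposes that the graph already has the shape of $h^s_{k,\underline g}$ — precisely what the lemma is being used to establish. The paper instead reads $s$ off a dimension count at $V_0$ alone: writing the valency of $V_0$ as $k+k'$ where $k$ is the number of bounded edges for which the decomposed diagonal contributes $\mathsf{pt}_E$ to the $V_0$‑factor, the virtual dimension of the local moduli problem at $V_0$ is $g_0 + k + k'$, while the insertions account for $g_0 + s + k$, forcing $k'=s$. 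Your proposal acknowledges that such a count is "the precise virtual dimension formula" you would need, but this is the essential content, not an anticipated obstacle to be deferred.

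Your argument for continuability is also both vaguer and of a different shape than the paper's. You argue via rigidity (extra moduli at intermediate $\geq 3$‑valent vertices) and genus (cycles raising the Betti number). The paper's argument is a clean counting argument using Lemma \ref{lem:ContinuePaths}: if one of the $k'=s$ edges at $V_0$ is not continuable, then the graph forks at its far vertex, each branch of the fork can be propagated out to an unbounded edge, and genus‑zero‑ness makes those unbounded edges distinct — contradicting the fact that $\Gamma$ has exactly $s+1$ unbounded edges (the $s$ tangency markings plus $F$). The "extra moduli" heuristic may be repairable, but as written it does not rule out the possibility that an intermediate trivalent vertex is pinned by the balancing condition in a way that keeps the type rigid; the unbounded‑edge count sidesteps this entirely. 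You should fill in the virtual dimension calculation and replace the moduli heuristic with the fork‑to‑unbounded‑edge counting.
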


\begin{proof}
There is a distinguished vertex $V_0 \in V(\Gamma)$ containing the non-tangency marking. We can assume that $h(V_0) = v_0$, for example by imposing the point constraint in the interior of the toric component. By \cite[Remark 6.1.1]{ranganthan2022logarithmic}, after potentially subdividing the target further, we can assume that $h$ maps vertices to vertices and edges to edges. This allows us to make the gluing argument, as in Proposition~\ref{Proposition : gluing}, for any $h$. Consequently, there is a virtual dimension constraint on each vertex, which enforces restrictions on $h$ as well as on which term in the decomposition of the diagonal contributes to the insertions.

Next we analyse the virtual dimension constraint for $V_0$. The virtual dimension of the associated moduli space is $g_0 + k + k'$, where $k+k'$ is the valency of $V_0$, where $k$ is the number of bounded edges adjacent to $V_0$ for which the contribution of the diagonal term consists of $\mathsf{pt}_E$ for $V_0$. We have that the dimension of the insertions is $g_0 + s +k$, so $k'= s$.
 
We will argue that all such edges $k'$ are continuable. Indeed, if an edge $\gamma_{-1}$ is not continuable, then the graph must fork at the vertex of $\gamma_{-1}$ which is not the origin. By Lemma~\ref{lem:ContinuePaths}, each branch of the forking can be continued until we find an unbounded edge. Since $\Gamma$ is of genus zero, each of the unbounded edges found in this way is unique, and it follows that $\Gamma$ has more than $s$ unbounded edges. 
\end{proof}

We can also exclude the rigid types where a vertex $V \neq V_0$ attached to a continuable bounded edge has higher genus by \cite[Lemma 14]{bousseau2018tropical}.

\begin{proof}[Proof of Proposition~\ref{proposition : rigid types}]
  By Lemma~\ref{lem:continuable edges}, we are left to show that the $k$ bounded edges adjacent to $V_0$ for which the diagonal contributes a $\mathsf{pt}_E$ to the insertions for $V_0$ each attach to a vertex $V_+$ which is one valent with image $v_j$ for $j \in \{1,\dots, n\}$. If $V_+$ is such a vertex, then the diagonal must contribute $1$ to the invariant for $V_+$ and $\mathsf{pt}_E$ to the invariant for $V_0$, because the virtual dimension of the moduli space associated to $V_+$ is  $g(V_+)$. Conversely, suppose $V_+$ is not a one-valent vertex. By the argument of \cite[Lemma 5.8, penultimate paragraph]{bousseau2020quantumtrop}, there is no additional edge connecting $V_+$ to a vertex with image $v_0$, since there are no additional insertions to make the contribution to that vertex non-zero. Finally, if there were an edge connecting $V_+$ to another vertex whose image is not $V_j$, then by Lemma~\ref{lem:ContinuePaths} there would be a sequence of edges which either ends in a cycle, which would give zero contribution due to the $\lambda_g$ insertion, or else ends in an unbounded edge. But we have already accounted for $s+1$ unbounded edges of $\Gamma$.    
\end{proof}

Let $k_{\ell j a}$ denote the number of vertices of $\Gamma_{k,\underline{g}}^s$ having genus $a$ amongst the $k_{\ell j}$ vertices with curve class $\ell [C_j]$ so that $$\sum_{a \geq 0} k_{\ell j a } = k_{\ell j}.$$

\begin{corollary}\label{prop : decomposition}
  We have that 
    $$\mathsf{N}_{g,\vartheta}^{\beta_p} = \sum_{k \vdash p} \sum_{\underline{g}} \frac{n_{h^s_{k,\underline{g}}}}{\left|\mathsf{Aut}\left(h^s_{k,\underline{g}}\right)\right|} \mathsf{N}_{h^s_{k,\underline{g}}} =  \sum_{k \vdash p} \sum_{\underline{g}} \mathsf{lcm}\left(|w_{i}(k)|, \left|\vartheta_{i_j}\right|\right) \left(\prod_{j=1}^n \prod_{\ell \geq 1} \prod_{a \geq 0} \frac{1}{k_{\ell j a}!}\right)\mathsf{N}_{h^s_{k,\underline{g}}}.$$
\end{corollary}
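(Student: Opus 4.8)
The plan is to combine the decomposition formula (the Proposition immediately preceding the statement) with Proposition \ref{proposition : rigid types}, which identifies the rigid tropical types contributing as exactly the $h^s_{k,\underline g}$, and then to carry out the two standard bookkeeping computations: the automorphism count and the tropical multiplicity $n_h$. First I would invoke the decomposition formula to write $\mathsf{N}_{g,\vartheta}^{\beta_p} = \sum_h \frac{n_h}{|\mathsf{Aut}(h)|}\mathsf{N}_h$, and then restrict the sum using Proposition \ref{proposition : rigid types} to the graphs $h^s_{k,\underline g}$ indexed by partitions $k \vdash p$ and genus distributions $\underline g = (g_0,\dots,g_{s(k)})$ with $\sum g_i = g$. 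This gives the first equality in the statement essentially for free; the content is the second equality, i.e.\ the explicit evaluation of the prefactors.

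For the automorphism group: the graph $\Gamma^s_{k,\underline g}$ has the central vertex $V_0$, the ``arm'' vertices $V_1,\dots,V_{s(k)}$ (each carrying a curve class $\ell[C_j]$ and a genus $g_i$), and the vertices $V'_1,\dots,V'_b$ pinned down by the tangency directions $\vartheta_{i_j}$; the unbounded edges $F_1,\dots,F_s,F$ are all distinguishable by their asymptotic direction, and the $V'_j$ are rigidly attached to those. Hence any automorphism fixes $V_0$, fixes each $F_i$, fixes the $V'_j$-subgraph, and can only permute the arm vertices $V_i$ that are genuinely interchangeable — namely those sharing the same weight vector $-\ell m_j$ \emph{and} the same genus label $a$. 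This gives $|\mathsf{Aut}(h^s_{k,\underline g})| = \prod_{j}\prod_{\ell\ge 1}\prod_{a\ge 0} k_{\ell j a}!$, which produces the $\prod \frac{1}{k_{\ell j a}!}$ factor. For the integer $n_h$: recall from \cite{abramovich2020decomposition} that $n_h$ is the lattice index of the map on tropical edge lengths (equivalently the product of the edge weights divided by the index of the lattice generated by the images of the edges), and here the bounded edges $E_j$ have weight $|w_j(k)|$ and the $E'_j$ have weight $|\vartheta_{i_j}|$; the cokernel computation collapses these to $n_h = \mathsf{lcm}(|w_i(k)|, |\vartheta_{i_j}|)$, exactly as in the genus-zero computations of \cite{bousseau2020quantumtrop, mandel2021theta}. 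Substituting both into $\sum_{k,\underline g}\frac{n_h}{|\mathsf{Aut}(h)|}\mathsf{N}_h$ yields the claimed formula.

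The step I expect to be the main obstacle — or at least the one requiring genuine care rather than routine symbol-pushing — is the computation of $n_{h^s_{k,\underline g}}$, since in the higher-genus descendant setting one must be sure that the edge-contraction/gluing identification underlying $n_h$ is unaffected by the genus decoration on the arm vertices and by the $\psi$- and $\lambda_g$-insertions, and that no extra factors of edge weights have been silently absorbed or produced when passing between the ``edge-length lattice'' normalization of \cite{abramovich2020decomposition} and the normalization used in Theorem \ref{thm : bdyrefined}. I would handle this by citing the corresponding genus-zero computation verbatim and remarking, as the authors do elsewhere, that adding the genus distribution $\underline g$ changes only the invariant $\mathsf{N}_h$ attached to each vertex and leaves the purely combinatorial factor $n_h$ and the automorphism count (up to the refinement of arm vertices by genus) untouched. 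The remaining verifications — that $\sum_{i=0}^{s(k)} g_i = g$ is the correct genus-splitting constraint, and that each $V'_j$ and each one-valent arm vertex indeed forces the diagonal term asserted in the proof of Proposition \ref{proposition : rigid types} — are immediate from that proof and need only be recalled.
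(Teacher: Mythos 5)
Your proposal is correct and follows essentially the same route as the paper: invoke the decomposition formula, restrict to the rigid types $h^s_{k,\underline g}$ via Proposition~\ref{proposition : rigid types}, compute $|\mathsf{Aut}(h^s_{k,\underline g})|=\prod_{j,\ell,a}k_{\ell j a}!$ by permuting the one-valent arm vertices carrying identical curve class and genus, and compute $n_h$ as the $\mathsf{lcm}$ of the $|w_i(k)|$ and $|\vartheta_{i_j}|$. One small imprecision: your parenthetical alternative description of $n_h$ as ``the product of the edge weights divided by the index of the lattice generated by the images of the edges'' is not an equivalent formulation — the paper instead characterises $n_h$ as the smallest positive integer such that rescaling $h^s_{k,\underline g}$ produces integral vertices and integral edge lengths (which, given the lengths $1/|w_j(k)|$ and $1/|\vartheta_{i_j}|$, immediately yields the $\mathsf{lcm}$) — but this does not affect the validity of your conclusion or your main line of argument.
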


\begin{proof}
  By Proposition~\ref{proposition : rigid types}, all curves that contribute in the decomposition formula arise from the construction of \eqref{defn:GraphFromPartition}. The fact that $n_{h^s_{k,\underline{g}}} = \mathsf{lcm}(|w_{i}(k)|, |\vartheta_{j}|)$ follows from the fact that $n_{h^s_{k,\underline{g}}}$ is the smallest integer such that after scaling by $n_{h^s_{k,\underline{g}}}$, $h^s_{k,\underline{g}}$ gets integral vertices and integral lengths. The automorphism factor can be computed by observing that those one-valent vertices which have the same curve class and genus can be permuted.
\end{proof}

It now suffices  to compute $\mathsf{N}_{h_{k,\underline{g}}}^s$.

\subsection{Vertex contributions}
Consider $h_{k,\underline{g}}^s\colon \Gamma_{k,\underline{g}}^s \rightarrow \cY_{m,0}^{\mathsf{trop}}$. There are three types of vertices of $\Gamma_{k,\underline{g}}^s$: 
\begin{enumerate}
    \item the vertex $V_0$ which maps via $h_{k,\underline{g}}^s$ to $v_0$, corresponding to $\overline{Y}_m$; 
    \item the vertices $V_{j}$ for $j=1,\dots,s(k)$, which map to vertices corresponding to $\tilde{\PP}_i$; 
    \item\label{type3} the vertices $V'_{j}$ for $j=1,\dots, b$, which map to vertices corresponding to $\tilde{\PP}_i$.
\end{enumerate}
\subsubsection{Toric contribution}

Consider the multiset $\Delta$ consisting of all $v_{V_0,E}$ for edges adjacent to $V_0$. This multiset consists of $\vartheta_{1},\dots,\vartheta_{s}$ together with the elements of $w(k)= (w_1(k),\dots,w_{s(k)}(k))$ and one copy of the zero vector. Associated to this multiset, we have the moduli space $\mathsf{M}_{g_0,\Delta}$  of genus $g_0$, $(s(k) + s + 1)$-pointed stable logarithmic maps to $\overline{Y}_m$. If $w_{j}(k) = -\ell m_i$, let
$$\ev_{j} \colon \mathsf{M}_{g_0,\Delta} \longrightarrow D_{i}$$
denote the evaluation morphism at the $\supth{j}$ marking for $j=1,\dots,s(k)$. Let
$$\ev \colon \mathsf{M}_{g_0,\Delta, \Delta_k} \longrightarrow \overline{Y}_m$$
denote the evaluation morphism at the last marked point. Define the \emph{toric contribution} to be $$\mathsf{N}_{g_0,\Delta, \Delta_k} = \int_{[\mathsf{M}_{g_0,\Delta}]^{\vir}} (-1)^{g_0} \lambda_{g_0} \prod_{j=1}^{s(k)} \ev_{j}^{*}(\mathsf{pt}_{D_i})\ev^* (\mathsf{pt})\psi^{s-2}.$$

\subsubsection{Non-toric contribution}
For each vertex $V_j$ for $j=1,\dots,s(k)$, there is an adjacent edge $E_j$ with $v_{V_j,E_j} = \ell m_i$ for some $i$. Consider the moduli space $\mathsf{M}_{a,\ell}(\tilde{\PP}_i)$  of one-pointed, genus $a$ stable logarithmic maps to $\tilde{\PP}_i$ of degree $\ell[C_i]$ with contact order $\ell$ along $D_{m_i}$. Define the \emph{non-toric contribution} to be
$$\mathsf{N}_{a}^{\ell \tilde{\PP}_i} = \int_{[\mathsf{M}_{a,\ell}(\tilde{\PP}_i)]^{\vir}}(-1)^a \lambda_{a}.$$
We will now express $\mathsf{N}_{h_{k,\underline{g}}^s}$ as a product over the \emph{toric} and \emph{non-toric} contributions. \textit{A priori}, there are also vertices of type~\eqref{type3}, but their contributions will cancel with an overall factor. In~\cite[Section 5.5]{bousseau2020quantumtrop}, the author proves a gluing statement at the level of virtual classes on the locus where the curve does not map to the codimension two strata of the pieces of the central fibre. This is sufficient for~\cite[Proposition 5.6]{bousseau2020quantumtrop} as the contribution from the complement of this locus will be killed by the presence of the $\lambda_g$ insertion. As in~\cite[Section 3.4]{kennedyhunt2023tropical}, this is no longer sufficient for the purposes here, the key point being that~\cite[Lemma 5.7]{bousseau2020quantumtrop} is no longer true in our situation. Instead we follow the same strategy as in \cite[Section~3.4]{kennedyhunt2023tropical}, by using the gluing formula of \cite{ranganthan2022logarithmic}. The upshot is the following.

\begin{proposition}\label{Proposition : gluing}
  We have that
    $$\mathsf{N}_{h_{k,\underline{g}}^s} = \frac{\mathsf{N}_{g_0,\Delta, \Delta_k}}{\mathsf{lcm}\left(|w_i(k)|,\left|\vartheta_{i_j}\right|\right)}\left(\prod_{j=1}^n \prod_{\ell \geq 1} \ell^{k_{\ell j}}\prod_{a \geq 0} \left(\mathsf{N}^{\ell \tilde{\PP}_i}_{a}\right)^{k_{\ell j a}}\right).$$
\end{proposition}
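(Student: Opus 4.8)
The plan is to apply the logarithmic gluing formula of \cite{ranganthan2022logarithmic} to the rigid tropical curve $h^s_{k,\underline{g}} : \Gamma^s_{k,\underline{g}} \to \cY^{\mathsf{trop}}_{m,0}$, exactly as in \cite[Section 3.4]{kennedyhunt2023tropical}, and then to identify the resulting vertex pieces with the toric and non-toric contributions. First I would recall that the decomposition formula already expresses $\mathsf{N}_{h^s_{k,\underline{g}}}$ as an integral over $\mathsf{M}^h_{g,\vartheta}$, and that after the further subdivision of the target used in the proof of Lemma \ref{lem:continuable edges} (so that $h$ sends vertices to vertices and edges to edges) there is a fibre product presentation of $\mathsf{M}^h_{g,\vartheta}$ over the evaluation maps at the nodes corresponding to bounded edges of $\Gamma^s_{k,\underline{g}}$. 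The gluing formula of \cite{ranganthan2022logarithmic} then expresses the virtual class, and hence $\mathsf{N}_{h^s_{k,\underline{g}}}$, as a sum over ways of splitting the insertions $(-1)^g\lambda_g\,\ev^\ast(\mathsf{pt})\,\psi^{s-2}$ across the vertices, with the diagonal classes at each bounded edge $E$ decomposed via a choice of basis; the relevant edge factors are the weights $|w_j(k)|$ and $|\vartheta_{i_j}|$ coming from the orders of tangency, which assemble into the $\mathsf{lcm}(|w_i(k)|,|\vartheta_{i_j}|)^{-1}$ prefactor once one accounts for the mismatch between the lengths $\ell(E_j)=|w_j(k)|^{-1}$, $\ell(E'_j)=|\vartheta_{i_j}|^{-1}$ and the integral edge lengths forced by the gluing.

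Next I would analyse the splitting of insertions vertex by vertex, using the virtual dimension constraints established in the proof of Lemma \ref{lem:continuable edges}. At the vertex $V_0$ the only term of the diagonal decomposition that survives, for each of the $k=s(k)$ bounded edges going to a one-valent vertex, is $\mathsf{pt}_{D_i}$ (the point class on the boundary divisor), because the moduli space at the corresponding $V_j$ has virtual dimension equal to its genus $g_j$, forcing the complementary class $1$ there; this is precisely the content of the first paragraph of the proof of Proposition \ref{proposition : rigid types}. Hence $V_0$ contributes $\mathsf{N}_{g_0,\Delta,\Delta_k}$, with the insertions $\prod_j \ev_j^\ast(\mathsf{pt}_{D_i})\,\ev^\ast(\mathsf{pt})\,\psi^{s-2}$ as in the definition of the toric contribution. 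Each vertex $V_j$ with curve class $\ell[C_i]$ contributes $\mathsf{N}^{\ell\tilde\PP_i}_{a}$ where $a=g_j$, and grouping the $k_{\ell j}$ such vertices according to their genus $a$ gives the factor $\prod_{a\geq 0}(\mathsf{N}^{\ell\tilde\PP_i}_a)^{k_{\ell j a}}$ for each pair $(\ell, j)$. The extra $\ell^{k_{\ell j}}$ arises from the tangency weight at the node joining $E_j$ to $V_j$, where $v_{V_j,E_j}=\ell m_i$ has lattice length $\ell$, contributing a factor of $\ell$ per such edge in the gluing formula.

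Finally I would address the vertices $V'_j$ of type (3): the moduli space $\mathsf{M}_{0,\ell}(\tilde\PP_i)$ attached to $V'_j$ with $\ell=|\vartheta_{i_j}|$ and genus zero contributes a factor $\mathsf{N}^{\ell\tilde\PP_i}_0$, but combined with the edge-weight factor $|\vartheta_{i_j}|$ at the node on $E'_j$ and the contribution of the $\mathsf{pt}_{D_i}$ insertion at $V_0$ this assembles into exactly the part of $\mathsf{lcm}(|w_i(k)|,|\vartheta_{i_j}|)$ coming from the $\vartheta$-directions, so — as stated — the type (3) vertices cancel against an overall factor and do not appear explicitly in the final formula. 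The main obstacle is the bookkeeping of the normalisation constants: one must carefully track the gluing-formula edge factors, the discrepancy between tropical edge lengths and integral lengths, and the diagonal-decomposition multiplicities, and check that they combine into precisely $\mathsf{lcm}(|w_i(k)|,|\vartheta_{i_j}|)^{-1}\prod_{j,\ell}\ell^{k_{\ell j}}$ and no more. This is the step where the higher-genus, descendant setting genuinely differs from \cite{bousseau2020quantumtrop} — as the text notes, \cite[Lemma 5.7]{bousseau2020quantumtrop} fails here — so the argument must instead lean on \cite{ranganthan2022logarithmic} to handle maps meeting the deep strata, exactly the subtlety flagged in \cite[Section 3.4]{kennedyhunt2023tropical}, with the $\lambda_g$ insertion still doing the work of killing the genuinely problematic loci (cycles in $\Gamma$ and higher-genus type (3) vertices).
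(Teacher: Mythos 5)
Your overall plan matches the paper's proof: reduce to the gluing formula of \cite{ranganthan2022logarithmic}, realise $\mathsf{M}^h_{g,\vartheta}$ as a fibre product of the vertex moduli spaces over boundary evaluations, decompose the diagonal class at each bounded edge, and use virtual-dimension constraints to isolate the surviving term $\mathsf{pt}_D\otimes 1$, then identify the remaining integrals with $\mathsf{N}_{g_0,\Delta,\Delta_k}$ and the $\mathsf{N}^{\ell\tilde\PP_i}_a$. You also correctly anticipate the role of the $\lambda_g$ insertion and the cancellation of the type-(3) vertices against edge weights.

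The genuine gap is that you invoke the gluing formula of \cite{ranganthan2022logarithmic} without verifying its hypotheses. The fibre product you write down is a priori a fibre product of logarithmic stacks, and its underlying stack need not agree with the fine and saturated logarithmic fibre product that the gluing theory produces; the two coincide only under a transversality hypothesis. The paper's proof handles this by first inducting on the bounded edges of $\cY^{\mathsf{trop}}_{m,0}$ so as to reduce to a single edge, then reshuffling the fibre square so that one of the legs (a product of the evaluation maps $\mathsf{M}_{a,\ell}(\tilde\PP)\to D$) is manifestly \emph{combinatorially flat}; by \cite[Section 12.3]{maulik2025logarithmicenumerativegeometrycurves} this guarantees the fs and ordinary fibre products agree, so the naive diagonal pull-back computes the glued virtual class. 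This is exactly the subtlety that makes \cite[Lemma 5.7]{bousseau2020quantumtrop} fail here, and it is not a bookkeeping issue: without establishing combinatorial flatness (or some substitute), the identity you want to integrate against is not known to hold. Your discussion of the $\mathsf{lcm}$ and $\ell^{k_{\ell j}}$ prefactors as arising from a "mismatch of edge lengths" is a plausible heuristic, but in the paper these factors emerge directly from the statement of the gluing formula once flatness is in place, and you should derive them from there rather than by informal length-normalisation arguments.
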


\begin{proof}
We can induct on the bounded edges of $\cY^{\mathsf{trop}}_{m,0}$, and so we can assume that there is only one edge. For simplicity, also assume $b=0$. 

We wish to turn an integral over $[\mathsf{M}_{h}]^{\vir}$ into one over
$$\kappa_{h}^!\left(\left[\mathsf{M}_{g_0,\Delta}\right]^{\vir} \times \prod_{j=1}^{n}\prod\limits_{\substack{a \geq 0 \\ \ell\geq 1}}  \left(\left[\mathsf{M}_{a,\ell}(\tilde{\PP})\right]^{\vir}\right)^{k_{\ell j a}}\right)$$ in the fibre product below, where $\kappa_h$ is the diagonal: 
\begin{equation*}
    \begin{tikzcd}
      \times \mathsf{M} \arrow[d] \arrow[r] & {\mathsf{M}_{g_0,\Delta} \times \prod_{j,\ell, a}  \left(\mathsf{M}_{a,\ell}\left(\tilde{\PP}\right)\right)^{k_{\ell j a}}} \arrow[d,"\ev_D"] \\
D^{s(k)} \arrow[r, "\kappa_h"]            & (D \times D)^{s(k)}\rlap{.}                                                                
\end{tikzcd}
\end{equation*}
If one of the arrows in the above diagram is combinatorially flat, this will follow from~\cite[Section 12.3]{maulik2025logarithmicenumerativegeometrycurves}, as the fine and saturated fibre product coincides with the ordinary fibre product. Here we can rewrite the diagram as 
\begin{equation*}
    \begin{tikzcd}
\times \mathsf{M} \arrow[d] \arrow[r] & {\prod_{j,\ell,a}  \left(\mathsf{M}_{a,\ell}\left(\tilde{\PP}\right)\right)^{k_{\ell j a}}} \arrow[d] \\
\mathsf{M}_{g_0,\Delta} \arrow[r]            & D^{s(k)}             
\end{tikzcd}
\end{equation*}
so that the right-hand vertical arrow is combinatorially flat, as it is a product of $s(k)$ arrows, each com\-bi\-na\-to\-ri\-ally flat. Consequently, we have that $\mathsf{N}_{h_{k,\underline{g}}^s}$ is equal to
$$\frac{\prod_{j=1}^{n}\prod_{\ell \geq 1}\ell^{k_{\ell j}}}{\mathsf{lcm}(|w_i(k)|)}\int_{[\mathsf{M}_{g_0,\Delta}]^{\vir} \times \prod_{j,\ell, a}  ([\mathsf{M}_{a,\ell}(\tilde{\PP})]^{\vir})^{k_{\ell j a}}} \left((-1)^{g_0} \lambda_{g_0} \times \prod_{j,\ell,a} (-1)^{a} \lambda_{a} \times \dots \times (-1)^{a} \lambda_{a} \right)\ev_D^*(\kappa_h) \ev^*(\pt)\psi^{s-2}.$$
We can decompose the diagonal as $\kappa_h = \prod_{i=1}^{s(k)}\pt_{D} \otimes 1 + 1 \otimes \pt_{D}$. Due to the virtual dimension of the moduli spaces, the only summand which contributes is $\pt_{D} \otimes 1$, and so we have $$\mathsf{N}_{h_{k,\underline{g}}^s} = \frac{\prod_{j=1}^{n}\prod_{\ell \geq 1}\ell^{k_{\ell j}}}{\mathsf{lcm}(|w_i(k)|)}\left(\int_{[\mathsf{M}_{g_0,\Delta}]^{\vir}} (-1)^{g_0} \lambda_{g_0} \prod_{j=1}^{s(k)} \ev_{j}^{*}(\mathsf{pt}_{D})\ev^* (\mathsf{pt})\psi^{s-2}\right) \left(\prod_{a \geq 0} \int_{[\mathsf{M}_{a,\ell}(\tilde{\PP}_i)]^{\vir}}(-1)^a \lambda_{a}\right),$$ which gives the desired formula. In the case where $b\neq 0$, the argument is the same but the contribution from each vertex $V_i$ will cancel with the weight of the edge; see~\cite[Lemma 15]{bousseau2018tropical}.
\end{proof}

Combining the above with Proposition~\ref{prop : decomposition} gives us the following corollary.

\begin{corollary}
We have that 
  \begin{equation*}
        \mathsf{N}_{g,\vartheta}^{\beta_p} = \sum_{k \vdash p} \sum_{\underline{g}} \mathsf{N}_{g_0,\Delta, \Delta_k} \left(\prod_{j \geq 1} \prod_{\ell \geq 1} \ell^{k_{\ell j}}\prod_{a \geq 0} \frac{1}{k_{\ell j a}!}\left(\mathsf{N}_{a}^{\ell \tilde{\PP}_j}\right)^{k_{\ell j a}}\right). 
    \end{equation*}
\end{corollary}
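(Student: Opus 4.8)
The plan is simply to combine the two results established immediately above. Starting from the decomposition formula of Corollary \ref{prop : decomposition},
$$\mathsf{N}_{g,\vartheta}^{\beta_p} = \sum_{k \vdash p} \sum_{\underline{g}} \mathsf{lcm}(|w_{i}(k)|, |\vartheta_{i_j}|) \left(\prod_{j=1}^n \prod_{\ell \geq 1} \prod_{a \geq 0} \frac{1}{k_{\ell j a}!}\right)\mathsf{N}_{h^s_{k,\underline{g}}},$$
I would substitute for each $\mathsf{N}_{h^s_{k,\underline{g}}}$ the expression provided by the gluing statement of Proposition \ref{Proposition : gluing},
$$\mathsf{N}_{h_{k,\underline{g}}^s} = \frac{\mathsf{N}_{g_0,\Delta, \Delta_k}}{\mathsf{lcm}(|w_i(k)|,|\vartheta_{i_j}|)}\left(\prod_{j=1}^n \prod_{\ell \geq 1} \ell^{k_{\ell j}}\prod_{a \geq 0} (\mathsf{N}^{\ell \tilde{\PP}_j}_{a})^{k_{\ell j a}}\right).$$

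The essential point is that the prefactor $\mathsf{lcm}(|w_i(k)|,|\vartheta_{i_j}|)$ in Corollary \ref{prop : decomposition} cancels against the denominator appearing in Proposition \ref{Proposition : gluing}, leaving behind only $\mathsf{N}_{g_0,\Delta,\Delta_k}$ and the two products. I would then reorganise the resulting product over the triples $(j,\ell,a)$, collecting the automorphism contribution $\frac{1}{k_{\ell j a}!}$ together with the factors $\ell^{k_{\ell j}}$ and $(\mathsf{N}_{a}^{\ell\tilde{\PP}_j})^{k_{\ell j a}}$ coming from the gluing, to obtain
$$\mathsf{N}_{g,\vartheta}^{\beta_p} = \sum_{k \vdash p} \sum_{\underline{g}} \mathsf{N}_{g_0,\Delta, \Delta_k} \left(\prod_{j \geq 1} \prod_{\ell \geq 1} \ell^{k_{\ell j}}\prod_{a \geq 0} \frac{1}{k_{\ell j a}!}(\mathsf{N}_{a}^{\ell \tilde{\PP}_j})^{k_{\ell j a}}\right),$$
where the product over $j \geq 1$ coincides with that over $j = 1,\dots,n$ because $k_{\ell j} = k_{\ell j a} = 0$ for $j > n$.

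I expect essentially no obstacle here: all of the substantive work has already been done in Proposition \ref{proposition : rigid types} (isolating the contributing rigid tropical types), Corollary \ref{prop : decomposition} (the weighted, automorphism-corrected decomposition formula) and Proposition \ref{Proposition : gluing} (splitting $\mathsf{N}_{h^s_{k,\underline{g}}}$ into toric and non-toric vertex contributions). The only care required is bookkeeping — checking that the index $i$ implicit in Proposition \ref{Proposition : gluing} through the relation $w_j(k) = -\ell m_i$ is the one making $\tilde{\PP}_j$ in the statement unambiguous, and that the range of the $\underline{g}$-summation together with the constraint $\sum_{i=0}^{s(k)} g_i = g$ is transported through both formulas without change.
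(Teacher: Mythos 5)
Your proposal is correct and takes the same approach as the paper, which simply states that the corollary follows by combining Proposition \ref{Proposition : gluing} with Corollary \ref{prop : decomposition}; the cancellation of the $\mathsf{lcm}$ factor and the regrouping of the product over $(j,\ell,a)$ are exactly the intended bookkeeping.
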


\begin{proposition}\label{Proposition : GW to toric}
  We have that 
    \begin{equation}
        \sum_{g \geq 0 } \mathsf{N}_{g,\vartheta}^{\beta_p} u^{2g} = \sum_{k \, \vdash \, p} \left(\sum_{g \geq 0} \mathsf{N}_{g,\Delta, \Delta_k}u^{2g + s(k)} \right)\prod_{j=1}^n \prod_{\ell \geq 1} \frac{\ell^{k_{\ell j}}}{k_{\ell j}!} \left(\frac{(-1)^{\ell - 1}}{\ell} \frac{1}{2 \sin\left(\frac{\ell u}{2}\right)}\right)^{k_{\ell j}}.
    \end{equation}
\end{proposition}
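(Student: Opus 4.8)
The plan is to promote the preceding corollary, which writes $\mathsf{N}^{\beta_p}_{g,\vartheta}$ as a sum over partitions $k\vdash p$ and genus distributions $\underline g$, into an identity of generating series in $u$. This needs only a genus bookkeeping step, a multinomial rearrangement, and the substitution of the known local ``non-toric'' series.

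First I would multiply the preceding corollary by $u^{2g}$ and sum over $g\ge 0$. Since $\Gamma^s_{k,\underline g}$ is a tree and the vertices $V'_j$ carry genus zero, the total genus satisfies $g=g_0+\sum_{j,\ell,a}a\,k_{\ell j a}$, where $k_{\ell j a}$ records how many of the $k_{\ell j}$ vertices of class $\ell[C_j]$ carry genus $a$; hence $u^{2g}$ factors as $u^{2g_0}\prod_{j,\ell,a}(u^{2a})^{k_{\ell j a}}$, and the sum over tuples $\underline g$ of fixed total genus becomes, upon summing over all $g$, an unconstrained sum over $g_0\ge 0$ and over collections $(k_{\ell j a})_{a\ge 0}$ with $\sum_a k_{\ell j a}=k_{\ell j}$.

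Next, for each pair $(j,\ell)$ I would apply the multinomial theorem,
\[
\sum_{\substack{(k_{\ell j a})_{a\ge 0}\\ \sum_a k_{\ell j a}=k_{\ell j}}}\prod_{a\ge 0}\frac{1}{k_{\ell j a}!}\bigl(\mathsf{N}^{\ell\tilde{\PP}_j}_a u^{2a}\bigr)^{k_{\ell j a}}=\frac{1}{k_{\ell j}!}\Bigl(\sum_{a\ge 0}\mathsf{N}^{\ell\tilde{\PP}_j}_a u^{2a}\Bigr)^{k_{\ell j}},
\]
which, since for fixed $p$ there are finitely many $k\vdash p$ and everything lives in $\CC[[u]]$, is a harmless rearrangement and gives
\[
\sum_{g\ge 0}\mathsf{N}^{\beta_p}_{g,\vartheta}u^{2g}=\sum_{k\vdash p}\Bigl(\sum_{g_0\ge 0}\mathsf{N}_{g_0,\Delta,\Delta_k}u^{2g_0}\Bigr)\prod_{j=1}^n\prod_{\ell\ge 1}\frac{\ell^{k_{\ell j}}}{k_{\ell j}!}\Bigl(\sum_{a\ge 0}\mathsf{N}^{\ell\tilde{\PP}_j}_a u^{2a}\Bigr)^{k_{\ell j}}.
\]
Finally I would substitute the evaluation of the local series
\[
\sum_{a\ge 0}\mathsf{N}^{\ell\tilde{\PP}_j}_a u^{2a}=\frac{(-1)^{\ell-1}}{\ell}\cdot\frac{u}{2\sin(\ell u/2)},
\]
whose $u\to 0$ value is the genus-zero multiple-cover coefficient $(-1)^{\ell-1}/\ell^2$; the resulting overall factor $u^{s(k)}=u^{\sum_{j,\ell}k_{\ell j}}$ is then absorbed into the toric series via $\bigl(\sum_{g_0}\mathsf{N}_{g_0,\Delta,\Delta_k}u^{2g_0}\bigr)u^{s(k)}=\sum_g \mathsf{N}_{g,\Delta,\Delta_k}u^{2g+s(k)}$, giving exactly the asserted formula.

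The hard part is the local series evaluation; everything else is formal. I would either cite it from the parallel treatment of the non-toric vertex contributions in \cite{bousseau2020quantumtrop}, or prove it directly by virtual localisation on $\mathsf{M}_{a,\ell}(\tilde{\PP}_j)$: the $\lambda_a$ insertion forces the image curve off the codimension-two strata of $\tilde{\PP}_j$, so the invariant reduces to tautological Hodge integrals over $\overline{\mathcal{M}}_{a,1}$ whose generating series is the $1/(2\sin)$ function (as in the $\lambda_g$/$\lambda_g\lambda_{g-1}$ formulas of Faber--Pandharipande). One should also confirm that, once the automorphism factors $\prod k_{\ell j a}!$ are taken into account, the sum over $\underline g$ of fixed total genus really does match the coefficient extraction above --- but this is the same combinatorics already used to pass from Proposition \ref{Proposition : gluing} to the preceding corollary, so no new difficulty arises.
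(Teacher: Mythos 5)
Your proof is correct and takes essentially the same route as the paper: promote the preceding corollary to a generating series, invoke the multinomial theorem to collapse the sum over genus assignments $(k_{\ell j a})$, and substitute Bousseau's multiple cover evaluation of the local series (cited as [Lemma 2.20, bousseau2018thesis]). The only cosmetic difference is that the paper works with $F^{\ell\tilde{\PP}_j}(u)=\sum_a \mathsf{N}^{\ell\tilde{\PP}_j}_a u^{2a-1}$ rather than your $\sum_a \mathsf{N}^{\ell\tilde{\PP}_j}_a u^{2a}$, so the $u^{s(k)}$ shift appears in the genus bookkeeping rather than by absorbing prefactors at the end; the two are trivially equivalent.
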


\begin{proof}
  Let $F^{\ell\tilde{\PP}_j}(u) = \sum_{a \geq 0} \mathsf{N}^{\ell \tilde{\PP}_j}_a u^{2a - 1}$ so that
  $$\left(F^{\ell\tilde{\PP}_j}(u)\right)^{k_{\ell j }} = \sum_{k_{\ell j} = \sum_{a \geq 0} k_{\ell j a}} \frac{k_{\ell j}!}{\prod_{a \geq 0} k_{\ell j a}} \left(\prod_{a \geq 0} \left(\mathsf{N}_{a}^{\ell \tilde{\PP}_j}\right)^{k_{\ell j a}}\right)u^{\sum_{a \geq 0}(2a -1)k_{\ell j a}}.$$
Next use that $$2g = 2g_0 + s(k) + (2a - 1) \sum_{j=1}^n \sum_{\ell \geq 0} \sum_{a \geq 0} k_{\ell j a}$$ to conclude 
\begin{equation*}
    \sum_{g \geq 0 } \mathsf{N}_{g,\vartheta}^{\beta_p} u^{2g} =  \sum_{k \, \vdash \, p} \left(\sum_{g \geq 0} \mathsf{N}_{g,\Delta, \Delta_k}u^{2g + s(k)} \right)\prod_{j=1}^n \prod_{\ell \geq 1} \frac{\ell^{k_{\ell j}}}{k_{\ell j}!} \left(F^{\ell \tilde{\PP}_j}(u)\right)^{k_{\ell j}}. 
\end{equation*}
The result now follows from the fact, see \cite[Lemma 2.20]{bousseau2018thesis}, that
\begin{equation*}\pushQED{\qed}
F^{\ell \tilde{\PP}_j}(u) = \frac{(-1)^{\ell - 1}}{\ell} \frac{1}{2 \sin\left(\frac{\ell u}{2}\right)}.\qedhere \popQED
\end{equation*}
\renewcommand{\qed}{}     
\end{proof}

\begin{proof}[Proof of Theorem A]
  We now combine Corollary~\ref{corollary : scatter to tropical} with Proposition~\ref{Proposition : GW to toric} in order to prove Theorem~\ref{thm:main}. Corollary~\ref{corollary : scatter to tropical} tells us that
  $$ \left[\left\langle\hat{\vartheta}_{r_1},\ldots,\hat{\vartheta}_{r_s}\right\rangle\right]^\mathsf{sym}
    = \sum_p \sum_{k \vdash p} \mathsf{N}^{\Delta,s}_{\mathsf{trop, \Delta_{k}}}(q)\left( \prod_{j=1}^n \prod_{\ell \geq 1} \frac{1}{k_{\ell j}!} \left(\frac{(-1)^{\ell -1}}{\ell} \frac{q^{\frac{1}{2}} - q^{-\frac{1}{2}}}{q^{\frac{\ell}{2}} - q^{-\frac{\ell}{2}}}\right)^{k_{\ell j}}\right)\left(\prod_{j=1}^n t_j^{p_j}\right).$$
    Applying Theorem~\ref{thm : bdyrefined} to $\mathsf{N}^{\Delta,s}_{\mathsf{trop, \Delta_{k}}}(q)$ tells us that, after identifying $q=e^{iu}$, this is equal to
    $$\sum_p \sum_{k \vdash p} \prod_{j=1}^n \prod_{\ell \geq 1} \ell^{k_{\ell j}}\frac{\left(\sum_{g \geq 0} \mathsf{N}_{g,\Delta, \Delta_k}u^{2g + s(k)} \right)}{\left((-i)\left(q^{\frac{1}{2}} - q^{-\frac{1}{2}}\right)\right)^{s(k)}} \left( \prod_{j=1}^n \prod_{\ell \geq 1} \frac{1}{k_{\ell j}!} \left(\frac{(-1)^{\ell -1}}{\ell} \frac{q^{\frac{1}{2}} - q^{-\frac{1}{2}}}{q^{\frac{\ell}{2}} - q^{-\frac{\ell}{2}}}\right)^{k_{\ell j}}\right)\left(\prod_{j=1}^n t_j^{p_j}\right).$$ Since $s(k) = \sum_{j=1}^n \sum_{\ell \geq 1} k_{\ell j}$ and since under the change of variables we have
    $$q^{\frac{\ell }{2}} - q^{-\frac{\ell }{2}} = 2 i \sin\left(\frac{\ell u}{2}\right),$$
    we have that
    $$ \left[\left\langle\hat{\vartheta}_{r_1},\ldots,\hat{\vartheta}_{r_s}\right\rangle\right]^\mathsf{sym} =  \sum_{p} \sum_{k \, \vdash \, p} \left(\sum_{g \geq 0} \mathsf{N}_{g,\Delta, \Delta_k}u^{2g + s(k)} \right)\prod_{j=1}^n \prod_{\ell \geq 1} \frac{\ell^{k_{\ell j}}}{k_{\ell j}!} \left(\frac{(-1)^{\ell - 1}}{\ell} \frac{1}{2 \sin\left(\frac{\ell u}{2}\right)}\right)^{k_{\ell j}}\left(\prod_{j=1}^n t_j^{p_j}\right),$$ and so the theorem follows from Proposition~\ref{Proposition : GW to toric}.
    
\end{proof}


\newcommand{\etalchar}[1]{$^{#1}$}
\providecommand{\bysame}{\leavevmode\hbox to3em{\hrulefill}\thinspace}

\end{document}